\definecolor{my-link}{rgb}{0.5,0.0,0.0}
\definecolor{my-blue}{rgb}{0.0,0.0,0.6}
\definecolor{my-red}{rgb}{0.5,0.0,0.0}
\definecolor{my-green}{rgb}{0.0,0.5,0.0}
\definecolor{nicos-red}{rgb}{0.75,0.0,0.0}
\definecolor{really-light-gray}{gray}{0.8}
\definecolor{darkgreen}{rgb}{0.0,0.5,0.0}
\definecolor{darkblue}{rgb}{0.0,0.0,0.3}
\definecolor{nicosred}{rgb}{0.65,0.1,0.1}
\definecolor{light-gray}{gray}{0.7}
\newcommandx{\addmath}[2][1=]{\todo[linecolor=red,backgroundcolor=red!25,bordercolor=red,#1]{#2}}
\newcommandx{\fixtext}[2][1=]{\todo[linecolor=blue,backgroundcolor=blue!25,bordercolor=blue,#1]{#2}}
\newcommandx{\note}[2][1=]{\todo[linecolor=yellow,backgroundcolor=yellow!25,bordercolor=yellow,#1]{#2}}
\newtheorem{theorem}{Theorem}[section]
\newtheorem{lemma}[theorem]{Lemma}
\newtheorem{proposition}[theorem]{Proposition}
\newtheorem{corollary}[theorem]{Corollary}
\theoremstyle{definition}
\theoremstyle{remark}
\newtheorem{remark}[theorem]{Remark}
\numberwithin{figure}{section}
\numberwithin{equation}{section}
\definecolor{sussexg}{rgb}{0,0.7,0.7}
\definecolor{sussexp}{rgb}{0.4,0,0.4}
\definecolor{sussexb}{rgb}{0.4,0.4,0.7}
\definecolor{mygray}{rgb}{0.75,0.75,0.75}
\newcommand{\E}{\mathbb{E}}
\newcommand{\cM}{\mathcal{M}}
\renewcommand{\P}{\mathbb{P}}
\newcommand{\R}{\mathbb{R}}
\newcommand{\Z}{\mathbb{Z}}
\newcommand{\N}{\mathbb{N}}
\newcommand{\X}{\mathbb{X}}
\newcommand{\cF}{\mathcal{F}}
\newcommand{\e}{\varepsilon}
\newcommand{\fl}[1]{\lfloor{#1}\rfloor}
\DeclareMathOperator{\bbI}{\mathbb{I}}
\DeclareMathOperator{\bbW}{\mathbb{W}}
\DeclareMathOperator{\bbZ}{\mathbb{Z}}
\DeclareMathOperator{\bfb}{\mathbf{b}}
\DeclareMathOperator{\bfm}{\mathbf{m}}
\DeclareMathOperator{\bfy}{\mathbf{y}}
\DeclareMathOperator{\sC}{\mathcal{C}}
\DeclareMathOperator{\sI}{\mathcal{I}}
\DeclareMathOperator{\sM}{\mathcal{M}}
\newcommand{\overbar}[1]{\mkern 1.7mu\overline{\mkern-2.7mu#1\mkern-2.7mu}\mkern 1.2mu}
\newcommand{\overbarA}[1]{\mkern 1.5mu\overline{\mkern-3mu#1\mkern-1.5mu}\mkern 1.5mu}
\def\Xbar{\overbar X}
\def\Abar{\overbarA A}
\def\Bbar{\overbar B}
\def\w{\omega}
\providecommand{\abs}[1]{\vert#1\vert}
\def\fe{\Lambda}
\DeclareMathOperator{\ext}{ext}    
\DeclareRobustCommand{\cev}[1]{%
  \mathpalette\do@cev{#1}%
}
\newcommand{\do@cev}[2]{%
  \fix@cev{#1}{+}%
  \reflectbox{$\m@th#1\vec{\reflectbox{$\fix@cev{#1}{-}\m@th#1#2\fix@cev{#1}{+}$}}$}%
  \fix@cev{#1}{-}%
}
\newcommand{\fix@cev}[2]{%
  \ifx#1\displaystyle
    \mkern#22mu
  \else
    \ifx#1\textstyle
      \mkern#22mu
    \else
      \ifx#1\scriptstyle
        \mkern#22mu
      \else
        \mkern#22mu
      \fi
    \fi
  \fi
}
\font \mymathbb = bbold10 at 11pt
\newcommand{\one}{\mbox{\mymathbb{1}}}    
\begin{document}

\title[Stationary directed polymers]{Uniqueness and ergodicity of stationary\\ directed polymers on $\Z^2$}

\author[C.~Janjigian]{Christopher Janjigian}
\address{Christopher Janjigian\\ University of Utah\\  Mathematics Department\\ 155 S 1400 E\\   Salt Lake City, UT 84112\\ USA.}
\email{janjigia@math.utah.edu}
\urladdr{http://www.math.utah.edu/~janjigia}
\thanks{C.\ Janjigian was partially supported by a postdoctoral grant from the Fondation Sciences Math\'ematiques de Paris while working at Universit\'e Paris Diderot.}

\author[F.~Rassoul-Agha]{Firas Rassoul-Agha}
\address{Firas Rassoul-Agha\\ University of Utah\\  Mathematics Department\\ 155S 1400E\\   Salt Lake City, UT 84112\\ USA.}
\email{firas@math.utah.edu}
\urladdr{http://www.math.utah.edu/~firas}
\thanks{F.\ Rassoul-Agha was partially supported by National Science Foundation grants DMS-1407574 and DMS-1811090}

\keywords{
cocycle, 
corrector, 
directed\ polymer, 
Gibbs\ measure, 
stationar\ polymer}
\subjclass[2000]{60K35, 60K37} 

\date{April 11, 2019}    

\begin{abstract}  
We study the ergodic theory of stationary directed nearest-neighbor polymer models on $\Z^2$, with i.i.d.\ weights. 
Such models are equivalent to specifying a stationary distribution on the space of weights and correctors that satisfy certain consistency conditions. We show that for prescribed weight distribution and corrector mean, there is at most one stationary polymer distribution which is ergodic under the $e_1$ or $e_2$ shift. Further, if the weights have more than two moments and the corrector  mean vector is an extreme point of the superdifferential of the limiting free energy, then the corrector distribution is ergodic under each of the $e_1$ and $e_2$ shifts.
\end{abstract}
\maketitle


\section{Introduction}

{\sl Directed polymers with bulk disorder}  were introduced in the statistical physics literature by Huse and Henley \cite{Hus-Hen-85} in 1985 to model the domain wall separating the positive and negative spins in the ferromagnetic Ising model with random impurities. These models have been the subject of intense study over the past  three decades; see the recent surveys \cite{Com-17, Com-Shi-Yos-04, Hol-09, Gia-07}.  
Much of the reason for the interest in these models is due to the conjecture that under mild hypotheses, such models 
are members of the Kardar-Parisi-Zhang (KPZ) universality class, which is an extremely wide-ranging class of models believed to have the same statistical and structural properties.  
See \cite{Cor-12,Cor-16,Hal-Tak-15,Qua-Spo-15,Qua-12} for recent surveys.

Much like characterization of the Guassian universality in terms of variants of the central limit theorem (CLT), 
the KPZ class is characterized by its scaling exponents and limit distributions. 
In the context of the directed polymer models we study in this paper, the conjecture is that the appropriately centered and normalized finite volume free energy
converges to  a limit distribution which is independent of the random environment that the polymer lives in. 
The KPZ scaling theory \cite{Kar-Par-Zha-86,Kru-Mea-Hal-92} predicts that the fluctuations around the limiting free energy are of the order of the cube root of the size of the volume, in contrast to the square root in the classical CLT.
Moreover, the limiting distribution is not Gaussian.
The effect of the environment is felt only through the centering and normalizing constants, which play a role similar to that of the mean and standard deviation in the CLT.

The scaling theory also predicts the values of these two non-universal constants if one has a complete description of the spatially-ergodic and temporally-stationary measures for the polymer model. See \cite{Spo-14} for an example of this computation in the setting of the semi-discrete polymer model introduced by O'Connell and Yor in \cite{Oco-Yor-01}. In the mathematics literature, stationary polymer models have been studied primarily in the context of solvable models, 
which have product-form stationary distributions.
The first such solvable stationary polymer model was the aforementioned model due to O'Connell and Yor. The first discrete model was introduced by Sepp\"al\"ainen in \cite{Sep-12-corr}. See also the models introduced in \cite{Bar-Cor-17, Cor-Sep-She-15,Thi-16,Thi-Dou-15} and studied further in \cite{Bal-Ras-Sep-18-,Cha-Noa-18-ejp,Cha-Noa-18-alea}. Such models remain to date the only polymer models for which the KPZ universality conjectures have been verified.

In the present paper, we investigate the ergodic theory of stationary directed polymers on the lattice $\bbZ^2$ with general i.i.d.\ weights and nearest-neighbor steps. 
The solvable model in \cite{Sep-12-corr} is an example of the type of model we study.
Specializing our results to the case where the limiting free energy is everywhere differentiable, 
we show that the ergodic distributions form a one-parameter family, indexed by the derivative of the free energy.
This differentiability is satisfied in the model in \cite{Sep-12-corr} and is conjectured to hold generally.
 As a consequence, our results imply that the ergodic 
 measures constructed in \cite{Sep-12-corr} are the only such measures in that model, which verifies that the hypotheses of the scaling theory are satisfied. More generally, we give conditions under which one can conclude that a stationary distribution is ergodic as well as conditions under which an ergodic measure is unique.\smallskip


Apart from being fundamental objects for the study of the scaling theory, classifying stationary ergodic polymer measures is important for addressing several other questions. We mention two.\smallskip

Our results on the classification of stationary and ergodic polymer measures can be reformulated 
in terms of a characterization of the stationary and ergodic 
{\sl global  physical solutions} to a discrete version of the viscous stochastic Burgers equation
	\[\partial_t U=\frac12\partial_{xx}U+\frac12\partial_x U^2+\partial_x\dot W,\]
where $\dot W$ is space-time white noise.
This connection is the focus of our companion paper \cite{Jan-Ras-19-pams-}.\smallskip

In the language of statistical mechanics, stationary directed polymer measures are in correspondence with shift-covariant {\sl semi-infinite Gibbs measures} which are consistent with the quenched point-to-point polymer measures. For a discussion of this point of view, we refer the reader to \cite{Jan-Ras-18-arxiv}.\smallskip

We close this introduction by giving an outline of the rest of the paper.
We introduce polymer measures and stationary polymer measures in Section \ref{sec:setting} then state our main results in Section \ref{sec:results}. 
In Section \ref{sec:prelims} we prove some preliminary lemmas and motivate the setting of Section \ref{sec:aux},
where we prove some auxiliary results that are then used in the proofs of the main theorems in Section \ref{sec:proofs}.



\section{Setting}\label{sec:setting}


\subsection{Random polymer measures}
Let $\Omega_0=\R^{\Z^2}$ and equip it with the product topology and product Borel $\sigma$-algebra $\cF_0$.
A generic point in $\Omega_0$  will be denoted by $\w$. Let $\{\w_x(\w):x\in\Z^2\}$  be
the natural {\sl coordinate projections}. The number $\w_x$ models the energy stored at site $x$ and 
is called the {\sl weight} or {\sl potential} or {\sl environment} at $x$.
Define the natural {\sl shift maps} $T_z:\Omega_0\to\Omega_0$, $z\in\Z^2$, by 
	$\w_x(T_z \w) = \w_{x+z}(\w)$. 
We are given a probability measure $\P_0$ on $(\Omega_0,\cF_0)$ such that $\{\w_x:x\in\Z^2\}$ are i.i.d.\ under $\P_0$ and  
$\E_0[\abs{\w_0}]<\infty$.

Let $\Pi_{u,v}$ be the set of up-right paths, i.e.\ paths in $\Z^2$ with steps in $\{e_1,e_2\}$, from $u$ to $v$. 
For $m\le n$ in $\Z\cup\{\pm\infty\}$ we write $x_{m,n}$ to denote a path $(x_m,x_{m-1},\dotsc,x_n)$ and we will use the convention that $x_k\cdot(e_1+e_2)=k$.

Given the weights, the {\sl quenched point-to-point polymer measures} 
are probability measures on up-right paths between two fixed sites in which the probability of a path is proportional to the exponential of its energy:
	\begin{align}\label{p2p}
	Q_{u,v}^\w(x_{m,n})=\frac{e^{\sum_{k=m+1}^{n}\w_{x_k}}}{Z_{u,v}^\w}\,,\quad x_{m,n}\in\Pi_{u,v},v\ge u.
	\end{align}
Here, $Z_{u,v}^\w$ is the {\sl quenched point-to-point partition function} given by
	\begin{align}\label{def:Z}
	Z_{u,v}^\w=\sum_{x_{m,n}\in\Pi_{u,v}}e^{\sum_{k=m+1}^{n}\w_{x_k}},\quad{v\ge u},
	\end{align}
with the convention that an empty sum equals $0$.  (Thus, $Z_{u,u}^\w=1$ and $Z_{u,v}^\w=0$ when $v\not\ge u$.)  

A computation shows that the point-to-point measure is a backward Markov 
chain starting at $v$, taking steps $\{-e_1,-e_2\}$, and with 
absorption at $u$. If we define
	\[B_u(x,y,\w)=\log Z^\w_{u,y}-\log Z^\w_{u,x},\quad x,y\ge u,\]
then the transition probabilities of this Markov chain are given by
	\begin{align}\label{eq:RWREu}
	\cev\pi_u(x,x-e_i,\w)=\frac{e^{\w_x}Z_{u,x-e_i}^\w}{Z_{u,x}^\w}
	=e^{\w_x-B_u(x-e_i,x,\w)}\,,
	\  u\le x\le v, x\ne u, i\in\{1,2\}.
	\end{align}
Note that if $x=u+ke_i$, $k\in\N$, then the chain takes $-e_i$ steps until it reaches $u$. The processes $B_u$ satisfy the (rooted) {\sl cocycle property}
	\begin{align}\label{Bu-coc}
	&B_u(x,y,\w)+B_u(y,z,\w)=B_u(x,z,\w),\quad x,y,z\ge u.
	\end{align}
Since $Z^\w_{u,v}$ satisfies the recurrence
	\begin{align}\label{Zrec}
	Z_{u,x}=e^{\w_x}(Z_{u,x-e_1}+Z_{u,x-e_2}),\quad x-u\in\N^2,
	\end{align}
we see that $B_u$ also satisfies the {\sl recovery property}
	\begin{align}\label{Bu-rec}
	&e^{-B_u(x-e_1,x,\w)}+e^{-B_u(x-e_2,x,\w)}=e^{-\w_x},\quad x-u\in\N^2.
	\end{align}
Note also that 
	\begin{align}\label{Bu-cov}
	B_u(x,y,T_z\w)=B_{u+z}(x+z,y+z,\w)
	\end{align} 
and that  if $x,y\le v$, then $B_u(x,y,\w)$ is a function of $\{\w_z:z\le v\}$ and is hence independent of $\{\w_z:z\not\le v\}$.
\smallskip

A stationary polymer measure is one that retains the properties \eqref{Bu-coc}, \eqref{Bu-rec}, \eqref{Bu-cov}, 
and the  above independence structure, but without a reference to the roots $u$ and $u+z$. This leads us to the notion of corrector distributions. 

\subsection{Corrector distributions}
Extend the measurable space $(\Omega_0,\cF_0)$ to $(\Omega,\cF)$ where
$\Omega=\R^{\Z^2}\times\R^{\Z^2\times\Z^2}$, equipped with the product topology, and $\cF$ is the product Borel 
$\sigma$-algebra. Now, $\w$ will denote a generic point in $\Omega$  
and $\{\w_x(\w):x\in\Z^2\}$ and $\{B(x,y,\w):x,y\in\Z^2\}$ denote the natural coordinate projections. 
The natural shift maps are now given by $T_z:\Omega\to\Omega$, $z\in\Z^2$, with
	$\w_x(T_z \w) = \w_{x+z}(\w)$ and $B(x,y,T_z\w)=B(x+z,y+z,\w)$. 
We will abuse notation and keep using $\w_x$ and $T_z$ to denote, respectively, the natural projections and shifts on the original space $\Omega_0$.\smallskip

We say that a probability measure $\P$ on $(\Omega,\cF)$ is a {\sl stationary future-independent $L^1$ corrector distribution with $\Omega_0$-marginal $\P_0$} if it satisfies the following:\smallskip

\noindent{\rm I. Distributional properties:} for all $x,y,z\in\Z^2$
\begin{enumerate}[label={\rm(\alph*)}, ref={\rm\alph*}, topsep=2pt, partopsep=1pt, itemsep=0.5pt] 
	\item Prescribed marginal: the $\Omega_0$-marginal is $\P_0$,
	\item Stationarity: $\P$ is invariant under $T_z$,
	\item Integrability: $\E[\abs{B(x,y)}]<\infty$,
	\item\label{future} Future-independence: for any down-right path $\bfy=(y_k)_{k\in\Z}$, i.e.\  $y_{k+1}-y_k\in\{e_1,-e_2\}$, 
		$\{B(x,y,\w):\exists v\in\bfy:x,y\le v\}$ 
		and $\{\w_z:z\not\le v,\forall v\in\bfy\}$ 
		are independent.
	\end{enumerate}
\noindent{\rm II. Almost sure properties:}  for $\P$-almost every $\w$ and all $x,y,z\in\Z^2$
	\begin{enumerate}[resume,label={\rm(\alph*)}, ref={\rm\alph*}, topsep=2pt, partopsep=1pt, itemsep=0.5pt]
	\item\label{cocycle} Cocycle: $B(x,y)+B(y,z)=B(x,z)$,
	\item\label{recovery} Recovery: $e^{-B(x-e_1,x)}+e^{-B(x-e_2,x)}=e^{-\w_x}$.
	\end{enumerate}\smallskip
	
We say $\P$ is {\sl ergodic} under $T_z$ (or $T_z$-ergodic) if $\P(A)\in\{0,1\}$ for all sets $A\in\cF$ such that $T_zA =A$. 	
As it is customary with probability notation (and was already done above), we will often omit the $\w$ from the arguments of $B(x,y)$ and $\w_x$.
A function $B$ satisfying property \eqref{cocycle} is called a {\sl cocyle}. If it also satisfies  \eqref{recovery} then it is called a {\sl corrector}.
Our use of the word ``corrector'' comes from an analogy with stochastic homogenization. 
See e.g.\ \cite[page 467]{Arm-Sou-12}. The recovery equation \eqref{recovery} is the analogue of  (3.4) in that paper.

\subsection{Stationary polymer measures from corrector distributions}\label{sec:statpol}

A {\sl stationary polymer measure} is given by first specifying a stationary future-independent corrector distribution $\P$ with an i.i.d.\ $\Omega_0$-marginal $\P_0$. Given a realization of the environment $\w$, the {\sl quenched polymer measure} $Q_v^\w$ rooted at $v\in\Z^2$ is a Markov chain starting at $v$ and having transition probabilities
	\begin{align}\label{RWRE}
	\cev\pi(x,x-e_i,\w)=e^{\w_x-B(x-e_i,x,\w)},\quad x\in\Z^2,i\in\{1,2\}.
	\end{align}
Observe that $\cev\pi(x,x-e_i,\w)=\cev\pi(0,-e_i,T_x\w)$. Hence,
stationary polymer measures are in fact examples of the familiar model of a {\sl random walk in a stationary random environment} (RWRE). 

The quenched point-to-point measure \eqref{p2p} can also be viewed as a forward Markov chain starting at $u$, taking steps $\{e_1,e_2\}$, with absorption at $v$ and transitions
	\begin{align}\label{RWRE-forward}
	\vec\pi_v(x,x+e_i,\w)=\frac{e^{\w_{x+e_i}}Z_{x+e_i,v}^\w}{Z_{x,v}^\w}
	=e^{\w_x-\Bbar_v(x,x+e_i)}\,,
	\ u\le x\le v, x\ne v, i\in\{1,2\},
	\end{align}
where now $\Bbar_v(x,y,\w)=\log Z^\w_{x,v}-\log Z^\w_{y,v}+\w_x-\w_y$. This structure also leads to stationary polymer measures that are stationary (forward) RWREs with steps $\{e_1,e_2\}$ and whose transitions are of the form $\vec\pi(x,x+e_i,\w)= e^{\w_x-\Bbar(x,x+e_i,\w)}$, $i\in\{1,2\}$, where $\Bbar$ is an $L^1$ stationary corrector but with the recovery equation replaced by $e^{-\w_x}=e^{-\Bbar(x,x+e_1)}+e^{-\Bbar(x,x+e_2)}$ and future-independence replaced by past-independence (defined in the obvious way). The two points of view are in fact equivalent due to the symmetry of $\P_0$ with respect to reflections of the axes.\smallskip

It should be noted that although the weights $\{\w_x:x\in\Z^2\}$ are i.i.d.\ under $\P_0$, the transitions $\{\cev\pi(x,x-e_1):x\in\Z^2\}$ (and $\{\vec\pi(x,x+e_1):x\in\Z^2\}$) 
are highly correlated, causing the paths of the RWREs to be superdiffusive 
with a $2/3$ scaling exponent. 
See for example Theorem 7.2 of \cite{Geo-etal-15}.

\subsection{Stationary polymer measures with boundary}\label{sec:bdry}

Another, perhaps more familiar, way of introducing stationary polymer measures comes by considering solutions to the recursion \eqref{Zrec}, but with appropriate boundary conditions. 
This is how these measures were introduced in the study of solvable models mentioned in the introduction. 
We explain in this section how this viewpoint is the same as the one via the framework of corrector distributions.\smallskip

Given a stationary future-independent corrector distribution $\P$ with an i.i.d.\ $\Omega_0$-marginal $\P_0$,  a down-right path $\bfy=y_{-\infty,\infty}$ with $y_m\cdot(e_1-e_2)=m$ for $m\in\Z$, and a point $u\in\bfy$, 
define the {\sl quenched path-to-point partition functions}
	\begin{align}\label{Zpath2pt}
	Z_{u,v}^{\bfy,\w}=\sum_{x_{m,n}\in\Pi_{\bfy,v}}e^{B(u,x_m)+\sum_{k=m+1}^{n}\w_{x_k}}.
	\end{align}
Here, $\Pi_{\bfy,v}$ is the set of up-right paths $x_{m,n}$ that start at a point $x_m\in\bfy$, exit $\bfy$ right away, i.e.\ $x_{m+1}\not\in\bfy$, and end at $x_n=v$. Recall that an empty sum is $0$. If $v\in\bfy$, then $\Pi_{\bfy,v}$ consists of a single path and $Z_{u,v}^{\bfy,\w}=e^{B(u,v)}$.   

The cocycle and recovery properties \eqref{cocycle} and \eqref{recovery} imply that $e^{B(u,x)}$ satisfies the same recurrence relation \eqref{Zrec} as $Z^{\bfy,\w}_{u,x}$. 
Since the two also match for $x\in\bfy$ we deduce that $Z^{\bfy,\w}_{u,x}=e^{B(u,x)}$ for all $x$ for which $\Pi_{\bfy,x}\not=\varnothing$.  
In particular, this definition is independent of the boundary $\bfy$ and gives a
stationary field $\{e^{B(u,v)}:u,v\in\Z^2\}$ of point-to-point partition functions.
Also, this explains why $B$ is called a corrector: it corrects the potential $\{\w_x:x\in\Z^2\}$, turning the superadditive $\log Z_{u,v}^\w$ into an additive cocycle $\log Z_{u,v}^{\bfy,\w}$. This is a key idea in stochastic homogenization theory.   For more, see for example Section 2 of \cite{Kos-07}.\smallskip

The corresponding {\sl quenched path-to-point polymer measure} is  given by
	\[Q_{u,v}^{\bfy,\w}(x_{m,n})=\frac{e^{B(u,x_m) + \sum_{k=m+1}^{n}\w_{x_k}}}{Z_{u,v}^{\bfy,\w}}\,,\quad x_{m,n}\in\Pi_{\bfy,v}.\]
$Q_{u,v}^{\bfy,\w}$ is the distribution of a Markov chain starting at $v$ and having transition probabilities 
	\[\cev\pi(x,x-e_i,\w)=\frac{e^{\w_x}Z^{\bfy,\w}_{u,x-e_i}}{Z^{\bfy,\w}_{u,x}}=e^{\w_x-B(x-e_i,x,\w)},\quad x\in\Z^2,i\in\{1,2\}\]
until reaching $\bfy$.  In other words, this is exactly the quenched distribution $Q^\w_v$, until absorption at $\bfy$ and the path-to-point polymer measure is exactly the stationary polymer measure introduced above.\smallskip

One can also go in the other direction: starting from a stationary model with boundary we can define a corrector distribution. More precisely, suppose we are given a boundary down-right path 
$\bfy=y_{-\infty,\infty}$ with $y_m\cdot(e_1-e_2)=m$ for $m\in\Z$ and a point $u\in\bfy$. Abbreviate 
$\bbI_{\bfy}^+=\{z\in\Z^2:z\not\le v,\forall v\in\bfy\}$. Equip  $\Omega_{\bfy}=\R^{\bbI_{\bfy}^+}\times\R^{\bfy}$ with the product topology and Borel $\sigma$-algebra
and denote the natural projections of an element $\w\in\Omega_{\bfy}$ by $\w_z$, $z\in\bbI_{\bfy}^+$, and 
$\bar\w_v$, $v\in\bfy$.  Suppose we are given a  
probability measure $\P_1$ on $\Omega_{\bfy}$ under which
$\{\w_z:z\in\bbI_{\bfy}^+\}$ and $\{\bar\w_v:v\in\bfy\}$ are independent and such that
the distribution of $\{\w_z:z\in\bbI_{\bfy}^+\}$ is the same under $\P_1$ as under $\P_0$. 

Let $m_0=u\cdot(e_1-e_2)$, so that $u=y_{m_0}$.  For $m\in\Z$ let $B(u,x_m)=\sum_{i=m_0}^{m-1}\bar\w_i$ for $m\ge m_0$ and $B(u,x_m)=-\sum_{i=m}^{m_0-1}\bar\w_i$ for $m\le m_0$.
Define the path-to-point partition function $Z_{u,v}^{\bfy,\w}$, $v\in\bbI_{\bfy}^+\cup\bfy$,
by \eqref{Zpath2pt}. The probability measure $\P_1$ is said to be a {\sl stationary polymer model with boundary} $\bfy$ if the distribution of 
\[\{\w_{v+z},Z_{u,y+z}^{\bfy,\w}/Z_{u,u+z}^{\bfy,\w}:v\in\bbI_{\bfy}^+,\,y\in \bbI_{\bfy}^+\cup\bfy\},\] 
induced by $\P_1$, does not depend on $z\in\Z_+^2$.

For $x,y\in\bbI_{\bfy}^+\cup\bfy$ let
	\[B_{\bfy}(x,y)=\log Z_{u,y}^{\bfy,\w}-\log Z_{u,x}^{\bfy,\w}.\]
Then the above definition is equivalent to saying that the distribution of 
\[\{\w_{v+z},B_{\bfy}(x+z,y+z):v\in\bbI_{\bfy}^+,\,x,y\in\bbI_{\bfy}^+\cup\bfy\},\] 
induced by $\P_1$, is the same for all $z\in\Z_+^2$. Kolmogorov's consistency theorem allows then to extend $\P_1$ to a probability measure $\P$ on 
$(\Omega,\cF)$ and a few direct computations check that $\P$ is 
a stationary future-independent corrector distribution with $\Omega_0$-marginal $\P_0$.

\subsection{The stationary log-gamma polymer}\label{sec:boundary}

As mentioned earlier, stationary polymer measures with boundaries were a crucial tool in the study of solvable models. 
In this section we recall the example of Sepp\"al\"ainen's log-gamma polymer 
\cite{Sep-12-corr}, which fits our setting. 
Related models include the stationary semi-discrete model in \cite{Oco-Yor-01}, where 
the boundary $(-\infty,\infty) \times \{0\}$ was used, which is analogous to $y_{-\infty,\infty}=\Z e_1$,
and the models studied in \cite{Bar-Cor-17, Cor-Sep-She-15, Thi-Dou-15}, where 
$y_{-\infty,0} = \Z_+ e_2$ and $y_{0,\infty}=\Z_+ e_1$ was used. In all of these models, the reference point is taken to be $u=0$. \smallskip

For $\theta>0$ let $\bbW_\theta$ denote the distribution of a random variable $X$ such that
$e^{-X}$ is gamma-distributed with scale parameter $1$ and shape parameter $\theta$. 
Let $\overline\bbW_\theta$ denote the distribution of $-X$.
The log-gamma polymer is the directed polymer measure on $\Z^2$ with $\P_0$ being the product measure $\bbW_\rho^{\otimes\Z^2}$ for some $\rho>0$.

Consider the boundary path $y_{-\infty,0}=\Z_+ e_2$ and $y_{0,\infty}=\Z_+ e_1$ and the origin point $u=0$. Then $\bbI_{\bfy}^+=\N^2$.
Fix $\theta\in(0,\rho)$ and let $\P_1$ be the product probability measure 
$\bbW_\rho^{\otimes\N^2}\otimes\bbW_\theta^{\otimes{\N e_1}}\otimes\overline\bbW_{\rho-\theta}^{\otimes{\N e_2}}$.  
The path-to-point partition functions $Z_{0,x}^{\bfy,\w}$, $x\in\Z_+^2$, can be computed inductively by the equations
	\[Z_{0,x}^{\bfy,\w}=e^{\w_x}(Z_{0,x-e_1}^{\bfy,\w}+Z_{0,x-e_2}^{\bfy,\w}),\quad x\in\N^2,\]
and the initial conditions 
	\[Z_{0,0}^{\bfy,\w} = 1, \quad Z_{0,me_1}^{\bfy,\w}=e^{\sum_{i=0}^{m-1}\bar\w_{ie_1}}\quad\text{and}\quad
	Z_{0,me_2}^{\bfy,\w}=e^{-\sum_{i=1}^{m}\bar\w_{ie_2}},\quad m\in\Z_+.\]
Equivalently, $B_{\bfy}(x,x+e_i)$, $i\in\{1,2\}$, $x\in\Z_+^2$, are computed inductively 
using
	\begin{align}\label{B-Lindley}
	\begin{split}
	&e^{B_{\bfy}(x-e_1,x)}=e^{\w_x}\bigl(1+e^{B_{\bfy}(x-e_1-e_2,x-e_2)}e^{-B_{\bfy}(x-e_1-e_2,x-e_1)}\bigr),\\
	&e^{B_{\bfy}(x-e_2,x)}=e^{\w_x}\bigl(1+e^{B_{\bfy}(x-e_1-e_2,x-e_1)}e^{-B_{\bfy}(x-e_1-e_2,x-e_2)}\bigr),
	\end{split}
	\end{align}
for $x\in\N^2$, and the initial conditions $B_{\bfy}(me_1,(m+1)e_1)=\bar\w_{me_1}$ and 
$B_{\bfy}(me_2,(m+1)e_2)=-\bar\w_{(m+1)e_2}$, $m\in\Z_+$. 
Compare to (3.2) in \cite{Sep-12-corr}.
Then $B_{\bfy}(x,y)$, $x,y\in\Z_+^2$, are computed via the cocycle property that $B_{\bfy}$ satisfies.
The Burke property \cite[Theorem 3.3]{Sep-12-corr} implies that $\P_1$ is stationary in the sense of  the previous section.\smallskip

Alternatively, 
one can use the boundary path $y_{-\infty,0}=\Z_+e_2$ and $y_{0,\infty}=\Z_+ e_1$ and then 
$\bbI_{\bfy}^+=\Z\times\N$ and $\P_1$ would be the product measure
$\bbW_\rho^{\otimes\bbI_{\bfy}^+}\otimes\bbW_\theta^{\otimes\bfy}$. The partition functions
$Z_{0,x}^{\bfy,\w}$, $x\in\Z\times\Z_+$, are now computed inductively by the equations
\[Z_{0,x}^{\bfy,\w}=\sum_{m=0}^\infty e^{\sum_{i=0}^m\w_{x-ie_1}}Z_{0,x-e_2-me_1}^{\bfy,\w},\quad x\in\Z\times\N,\]
and the initial conditions 
	\[Z_{0,0}^{\bfy,\w} = 1, \quad Z_{0,me_1}^{\bfy,\w}=e^{\sum_{i=0}^{m-1}\bar\w_{ie_1}}\quad\text{and}\quad
	Z_{0,-me_1}^{\bfy,\w}=e^{-\sum_{i=1}^{m}\bar\w_{-ie_1}},\quad m\in\Z_+.\]
Equivalently, $B_{\bfy}(x,x+e_i)$, $i\in\{1,2\}$, $x\in\Z\times\Z_+$ are computed inductively 
using
	\begin{align}
	&e^{B_{\bfy}(x-e_2,x)}=e^{\w_x}\Bigl(1+\sum_{m=1}^\infty\prod_{i=1}^m e^{\w_{x-ie_1}-B_{\bfy}(x-e_2-ie_1,x-e_2-(i-1)e_1)}\Bigr),\label{LindB}\\
	&e^{B_{\bfy}(x-e_1,x)}=e^{\w_x}\bigl(1+e^{B_{\bfy}(x-e_1-e_2,x-e_2)}e^{-B_{\bfy}(x-e_1-e_2,x-e_1)}\bigr),\notag
	\end{align}
for $x\in\Z\times\N$, and the initial conditions $B_{\bfy}(me_1,(m+1)e_1)=\bar\w_{me_1}$, $m\in\Z$. Analysis of the general polymer model with boundary conditions of this type plays a key role in our analysis. See in particular the discussion in Section \ref{sec:aux}.

By the uniqueness in Lemma \ref{lem:Yunique}, the distribution of $\{B_{\bfy}(x,x+e_i):x\in\Z_+^2\}$ in this construction is the same as the one in the above construction. Consequently, $\P_1$ is again stationary in the sense of the previous section. See Lemma \ref{lm:lineBurke} for the details of this argument. 

\section{Main results}\label{sec:results}

Consider a stationary future-independent $L^1$ corrector distribution $\P$ with $\Omega_0$-marginal $\P_0$.
By shift-invariance and the cocycle property, 
	\[\E[B(0,x+y)]=\E[B(0,x)]+\E[B(x,x+y)]=\E[B(0,x)]+\E[B(0,y)],\] 
for all $x,y\in\Z^2$. Hence, there exists a unique vector $\bfm_\P\in\R^2$, called the {\sl mean vector}, such that $\E[B(0,x)]=x\cdot\bfm_\P$ for all $x\in\Z^2$. In particular, $\bfm_\P\cdot e_i=\E[B(0,e_i)]$.\smallskip


Our first main result is on the uniqueness of ergodic corrector distributions with prescribed i.i.d.\ $\Omega_0$-marginal and mean $\bfm_\P\cdot e_i$.


\begin{theorem}\label{thm:uniqueness}
Fix an i.i.d.\ probability measure $\P_0$ on $(\Omega_0,\cF_0)$ with $L^1$ weights.
Fix a number $\alpha\in\R$. Fix $i\in\{1,2\}$. There is at most one stationary $T_{e_i}$-ergodic future-independent $L^1$ corrector distribution $\P$ with $\Omega_0$-marginal $\P_0$ and such that $\bfm_\P\cdot e_i=\alpha$. 
\end{theorem}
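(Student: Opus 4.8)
The plan is to show that any two such corrector distributions $\P$ and $\P'$ with the same $\Omega_0$-marginal $\P_0$ and the same value $\bfm_\P\cdot e_i=\bfm_{\P'}\cdot e_i=\alpha$ must coincide, using a coupling-and-monotonicity argument adapted to the $e_i$-direction. Without loss of generality take $i=1$. The key structural observation is that, along a fixed $e_2$-column (a down-right path of the type $y_{-\infty,\infty}=\Z e_1$ translated, or rather the relevant line transversal to $e_1$), the recovery equation \eqref{recovery} together with future-independence lets one reconstruct the entire field $\{B(x,y)\}$ from boundary data on a single line plus the i.i.d.\ weights in the forward half-space. Concretely, one should set up a line boundary $\bfy=\Z e_2$ (so that $\bbI_\bfy^+=\Z\times\N$ in the notation of Section~\ref{sec:bdry}, up to relabeling axes), recognize via Lemma~\ref{lem:Yunique} that the $B$-field is a deterministic measurable function of the boundary increments $\{B(x,x+e_1):x\in\bfy\}$ and the bulk weights, and reduce the problem to showing that the law of the boundary increment process on that line is uniquely determined.

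First I would record that the boundary increment process $\{\eta_k:=B(ke_1,(k+1)e_1)\}_{k\in\Z}$ (reading along the line transversal to the ergodic direction — I am being deliberately loose about which axis; the point is it is the line of ``initial data'' for the Lindley-type recursion \eqref{LindB}) is, under $\P$, a stationary process under the shift, with $\E[\eta_0]=\alpha$, and that the map pushing a boundary law plus independent i.i.d.\ bulk weights to the full corrector distribution is covariant and monotone. Second, I would couple $\P$ and $\P'$ on a common probability space sharing the \emph{same} bulk weights $\{\w_z\}$ and independent boundary processes $\eta,\eta'$, and compare the resulting $B$-fields on a large box $\{1,\dots,N\}\times\{-N,\dots,N\}$ (again up to axis labeling) propagated from the line $\{k=0\}$. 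The comparison should exploit that $e^{B(\cdot,\cdot)}$ satisfies the same recursion \eqref{Zrec} as a partition function, so the difference of the two propagated fields is controlled by the difference of boundary data in a way that contracts: a path-weight representation expresses $e^{B(x,\cdot)}$ as a sum over up-right paths weighted by boundary exponentials and bulk weights, and the quenched polymer measure attached to it is a positive-recurrent-like object whose ``influence'' of the boundary at distance $n$ decays. Third, invoking $T_{e_1}$-ergodicity, I would take $N\to\infty$ and use a Birkhoff/ergodic-averaging argument: the $e_1$-ergodicity forces the tail of the boundary influence to be $\P$-a.s. constant, and since the two corrector fields have equal marginal $\P_0$ and equal mean in the $e_1$-direction, the constant must be zero, yielding $B=B'$ a.s. and hence $\P=\P'$.

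The main obstacle — and where the real work lies — is the contraction/coupling step: quantifying that the corrector field propagated from a boundary line ``forgets'' the boundary in the ergodic direction. Unlike the classical log-gamma case there is no exact solvability, so one cannot read off a Burke-type property; instead one must argue softly. The natural tool is the queueing / Lindley-recursion interpretation in \eqref{LindB}: $e^{B}$ along successive columns evolves by an order-preserving, homogeneous (in law) transformation driven by the i.i.d.\ bulk weights, and one wants a loss-of-memory statement for this Markov chain on the space of boundary configurations. I expect the cleanest route is: (i) show via monotonicity that if $\eta\le\eta'$ coordinatewise then the propagated fields are ordered; (ii) show any two stationary boundary laws with the same mean $\alpha$ can be stochastically ordered \emph{after} a shift, or more robustly, that their propagated fields have a.s. equal $e_1$-increments by the ergodic argument applied to $\E[B(0,e_1)]-\E[B'(0,e_1)]=0$ together with the a.s. ordering; (iii) conclude $\eta\stackrel{d}{=}\eta'$ and hence $\P=\P'$. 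I anticipate that steps concerning existence of the propagation map and its measurability are routine given Section~\ref{sec:bdry} and Lemma~\ref{lem:Yunique}, that the auxiliary results of Section~\ref{sec:aux} are exactly designed to supply the memory-loss estimate, and that the integrability hypothesis $\E_0[|\w_0|]<\infty$ plus $\E[|B(x,y)|]<\infty$ is what makes the ergodic-averaging identification of the constant legitimate.
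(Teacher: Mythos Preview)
Your high-level plan is the right one and matches the paper: reduce to showing that the law $\mu$ of the boundary increment process $\{B(ne_1,(n+1)e_1):n\in\Z\}$ is uniquely determined (this is Lemma~\ref{lm:aux} and Corollary~\ref{cor:unique P}), then argue uniqueness of $\mu$ among ergodic laws with the prescribed mean. You are also right that Lemma~\ref{lem:Yunique} and monotonicity (Lemma~\ref{lem:mono}) are the relevant ingredients. One minor confusion: for $i=1$ the boundary line is $\Z e_1$, not $\Z e_2$; the $T_{e_1}$-ergodicity of $\P$ is exactly what makes the boundary process ergodic under its own shift, and stationarity of $\P$ under $T_{e_2}$ is what makes $\mu$ a \emph{fixed point} of the update map $\Phi$. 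You never explicitly isolate this fixed-point structure, and it is essential.

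The genuine gap is your step (ii). Neither alternative you propose works. Two ergodic stationary sequences with the same mean are in general \emph{not} stochastically comparable, even after a shift (take two distinct i.i.d.\ laws with the same mean), so the first option fails outright. The second option (``a.s.\ ordering plus equal means forces equality'') presupposes an a.s.\ ordering you do not have. The paper's mechanism is different and sharper: it works in the $\bar\rho$ metric and uses Chang's pointwise-maximum trick. Given an ergodic coupling $\lambda$ of $(A^1,A^2)$, set $A^3_n=A^1_n\vee A^2_n$ and propagate all three with the same weights. Monotonicity gives $\Abar^3_n\ge \Abar^1_n\vee \Abar^2_n$, and since $\Phi$ preserves means (equation~\eqref{eq:means}), one gets
\[
E^\lambda\bigl[|\Abar^1_0-\Abar^2_0|\bigr]\le E^\lambda\bigl[2\Abar^3_0-\Abar^1_0-\Abar^2_0\bigr]=E^\lambda\bigl[2A^3_0-A^1_0-A^2_0\bigr]=E^\lambda\bigl[|A^1_0-A^2_0|\bigr],
\]
with strict inequality on a positive-probability event whenever $\mu^1\ne\mu^2$ but the means agree (Proposition~\ref{prop:contraction}). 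This gives at most one ergodic fixed point of $\Phi$ at each mean (Corollary~\ref{cor:unique}), which together with Corollary~\ref{cor:unique P} finishes the proof. Your ``large box $N\to\infty$ loss of memory'' picture is not needed: a single step of $\Phi$ already strictly contracts $\bar\rho$, and both candidate boundary laws are fixed points.
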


In terms of stationary polymers with boundary $\bfy=\bbZ e_1$, 
this theorem says that, in general, the $T_{e_1}$-ergodic stationary distributions form a one parameter family, indexed by the mean of the boundary weights.
The formulation in terms of corrector distributions allows us to extend this result to more general boundary geometries.\smallskip
 
We next turn to the question of determining which values of the parameter $\bfm_\P$ admit ergodic stationary polymers. To state our second result we need a few more definitions and some more hypotheses. 
Recall the point-to-point partition functions \eqref{def:Z}.
Assume that $\E[\abs{\w_0}^p]<\infty$ for some $p>2$. Then Theorem 2.2(a), Remark 2.3, and Theorems 2.4, 2.6(b), and  3.2(a) of \cite{Ras-Sep-14} imply that there exists a deterministic continuous 
$1$-homogenous concave function $\fe_{\P_0}:\R_+^2\to\R$ such that $\P_0$-almost surely
	\begin{align}\label{free energy}
	n^{-1}\log Z_{0,\fl{n\xi}}^\w\mathop{\longrightarrow}_{n\to\infty}\fe_{\P_0}(\xi)\quad\text{for all $\xi\in\R_+^2$.}
	\end{align}

Given $\xi\in(0,\infty)^2$, let 
	\begin{align}\label{super diff}
	\partial\fe_{\P_0}(\xi)=\bigl\{\bfb\in\R^2: \bfb\cdot(\xi-\zeta)\le \fe_{\P_0}(\xi)-\fe_{\P_0}(\zeta),\ \forall\zeta\in\R_+^2\bigr\}
	\end{align}
denote the superdifferential of $\fe_{\P_0}$ at $\xi$. This is a convex set.
Let $\ext\partial\fe_{\P_0}(\xi)$ denote its extreme points. If $\xi\in(0,\infty)^2$ then $\fe_{\P_0}$ is differentiable at $\xi$ if and only if  
	\begin{align}\label{extremes}
	\partial\fe_{\P_0}(\xi)=\ext\partial\fe_{\P_0}(\xi)=\{\nabla\fe_{\P_0}(\xi)\}.
	\end{align}
Otherwise, $\ext\partial\fe_{\P_0}(\xi)$ consists of exactly two points (see Lemma 4.6(c) in \cite{Jan-Ras-18-arxiv}). 
It is conjectured that $\fe_{\P_0}$ is differentiable on $(0,\infty)^2$ and then \eqref{extremes} holds 
for all $\xi\in(0,\infty)^2$. Note that due to the homogeneity of $\fe_{\P_0}$, $\partial\fe_{\P_0}(\xi)=\partial\fe_{\P_0}(c\xi)$ for all $\xi\in(0,\infty)^2$ and $c>0$.\smallskip

Before presenting our second main result in the present paper, we record some useful inputs from our companion paper \cite{Jan-Ras-18-arxiv}. The first is Lemma 4.5(a) in that paper and it characterizes the possible mean vectors of corrector distributions.
\begin{lemma}
If $\P$ is a stationary future-independent corrector distribution with an $\Omega_0$-marginal given by i.i.d.\ $L^p$ weights, $p>2$, then $\bfm_\P\in\partial\fe_{\P_0}(\xi)$ for some $\xi \in \{t e_1 + (1-t) e_2 : 0 < t < 1\}=]e_1,e_2[$.
\end{lemma}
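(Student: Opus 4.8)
Write $\bfm=\bfm_\P$ and $m_i=\bfm\cdot e_i$. Since $\fe_{\P_0}$ is concave, $1$-homogeneous and $\fe_{\P_0}(0)=0$, for $\xi\in(0,\infty)^2$ the relation $\bfm\in\partial\fe_{\P_0}(\xi)$ is equivalent to: $\bfm\cdot\zeta\ge\fe_{\P_0}(\zeta)$ for all $\zeta\in\R_+^2$, \emph{and} $\bfm\cdot\xi=\fe_{\P_0}(\xi)$ (the first inequality with $\zeta=0$ and $\zeta=2\xi$ forces $\bfm\cdot\xi=\fe_{\P_0}(\xi)$, and then \eqref{super diff} reduces to it). So the plan is to prove (a) $\bfm\cdot\zeta\ge\fe_{\P_0}(\zeta)$ for all $\zeta\in\R_+^2$, and (b) equality at some $\xi\in\,]e_1,e_2[$.

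Both parts use the down-right boundary path $\bfy=\{-me_1:m\ge0\}\cup\{-ke_2:k\ge0\}$, which contains $0$, has $\bfy\cap\Z_+^2=\{0\}$ and $\Z_+^2\setminus\{0\}\subseteq\bbI_{\bfy}^+$. By the construction in Section~\ref{sec:bdry}, $e^{B(0,v)}=Z^{\bfy,\w}_{0,v}$ for all $v\in\Z_+^2$, and grouping the paths in $\Pi_{\bfy,v}$ by their starting point on $\bfy$ (from which the first step is forced whenever that point is not $0$) gives the exact identity
\[
e^{B(0,v)}=Z^\w_{0,v}+\sum_{m\ge1}e^{B(0,-me_1)+\w_{-me_1+e_2}}Z^\w_{-me_1+e_2,\,v}+\sum_{k\ge1}e^{B(0,-ke_2)+\w_{e_1-ke_2}}Z^\w_{e_1-ke_2,\,v}.
\]
Dropping all but the first term gives $e^{B(0,v)}\ge Z^\w_{0,v}$, hence $\E[B(0,v)]\ge\E[\log Z^\w_{0,v}]$; taking $v=\fl{n\zeta}$ with $\zeta\in(0,\infty)^2$, dividing by $n$, and letting $n\to\infty$ turns the left side into $\zeta\cdot\bfm$ (it equals $n^{-1}\fl{n\zeta}\cdot\bfm$) and, by superadditivity of $\log Z^\w$ together with \eqref{free energy}, the right side into $\fe_{\P_0}(\zeta)$; continuity extends this to all of $\R_+^2$, which is (a).

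For (b) the point is that the displayed identity is an \emph{equality}, which is where the recovery property \eqref{recovery} enters essentially: a cocycle of the same mean that failed recovery would still satisfy (a) but its mean need not touch the graph of $\fe_{\P_0}$. Bounding each bulk factor above by a point-to-point partition function, using $n^{-1}\log Z^\w_{-\fl{tn}e_1+e_2,\,\fl{n\zeta}}\to\fe_{\P_0}(\zeta+te_1)$ (and its $e_2$ analogue), the exact identity $n^{-1}\E[B(0,-\fl{tn}e_i)]=-\fl{tn}m_i/n\to-t\,m_i$, and that the $m,k\to\infty$ tails of the two sums are negligible, one obtains, for any fixed $\zeta\in(0,\infty)^2$,
\[
\bfm\cdot\zeta\ \le\ \Bigl(\sup_{t\ge0}\bigl[\fe_{\P_0}(\zeta+te_1)-t\,m_1\bigr]\Bigr)\vee\Bigl(\sup_{s\ge0}\bigl[\fe_{\P_0}(\zeta+se_2)-s\,m_2\bigr]\Bigr).
\]
Fix $\zeta=(\tfrac12,\tfrac12)$. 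Part (a) gives $\fe_{\P_0}(\zeta+te_1)-t\,m_1\le\bfm\cdot(\zeta+te_1)-t\,m_1=\bfm\cdot\zeta$ for every $t\ge0$, and similarly in the $e_2$ direction, so the right side above is $\le\bfm\cdot\zeta$; hence it equals $\bfm\cdot\zeta$, with the maximum achieved by, say, the first supremum. Also $m_i>\fe_{\P_0}(e_i)$ strictly: \eqref{recovery} at $x=e_i$ forces $e^{-B(0,e_i)}<e^{-\w_{e_i}}$, so $B(0,e_i)>\w_{e_i}$ a.s., whence $m_i>\E[\w_0]=\fe_{\P_0}(e_i)$ (the last equality from \eqref{free energy} along the axis). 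Because $m_1>\fe_{\P_0}(e_1)$, the map $t\mapsto\fe_{\P_0}(\zeta+te_1)-t\,m_1$ is continuous on $[0,\infty)$ and tends to $-\infty$, so its supremum is attained at some finite $t^\ast\ge0$; with $\eta=\zeta+t^\ast e_1\in(0,\infty)^2$ this yields $\fe_{\P_0}(\eta)-t^\ast m_1=\bfm\cdot\zeta$, i.e.\ $\fe_{\P_0}(\eta)=\bfm\cdot(\zeta+t^\ast e_1)=\bfm\cdot\eta$. By $1$-homogeneity the normalized direction $\xi=\eta/\bigl(\eta\cdot(e_1+e_2)\bigr)\in\,]e_1,e_2[$ satisfies $\bfm\cdot\xi=\fe_{\P_0}(\xi)$, which with (a) gives $\bfm\in\partial\fe_{\P_0}(\xi)$.

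The main obstacle is the bound in (b). It rests on the exact decomposition of $e^{B(0,v)}$, hence on recovery, and—crucially, since no ergodicity is assumed—the asymptotics of the two boundary sums must be extracted at the level of expectations. There the identity $\E[B(0,-me_i)]=-m\,m_i$ is exact, but one still has to control the interaction between the boundary increments $e^{B(0,-me_i)}$ and the bulk partition functions $Z^\w$ (the future-independence \eqref{future} of the corrector is what lets one factor the relevant expectations, the two fields being measurable with respect to complementary parts of $\w$), and one has to rule out a dominant contribution from starting points far out along $\bfy$. The hypothesis $p>2$ is used here (e.g.\ via $\max_{|z|\le Cn}|\w_z|=o(n)$) and in the identification of $\fe_{\P_0}$ through \eqref{free energy}.
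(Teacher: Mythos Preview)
The paper does not prove this lemma; it is quoted verbatim as Lemma~4.5(a) of the companion paper \cite{Jan-Ras-18-arxiv}, so there is no in-paper argument to compare against. Your sketch therefore has to be assessed on its own merits.

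Part (a) is correct and clean: the identity $e^{B(0,v)}=Z^{\bfy,\w}_{0,v}$ from Section~\ref{sec:bdry}, applied with your $\bfy$, immediately gives $B(0,v)\ge\log Z^\w_{0,v}$, and superadditivity plus \eqref{free energy} turns $\E[B(0,\fl{n\zeta})]=\fl{n\zeta}\cdot\bfm$ into $\bfm\cdot\zeta\ge\fe_{\P_0}(\zeta)$.

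Part (b) has a genuine gap. The displayed upper bound
\[
\bfm\cdot\zeta\ \le\ \sup_{t\ge0}\bigl[\fe_{\P_0}(\zeta+te_1)-t\,m_1\bigr]\ \vee\ \sup_{s\ge0}\bigl[\fe_{\P_0}(\zeta+se_2)-s\,m_2\bigr]
\]
is the heart of the matter, and your justification does not go through as written. Two concrete problems:
\begin{itemize}
\item[(i)] You invoke future-independence to ``factor the relevant expectations'', but the only factorization it yields is of the form $\E\bigl[e^{B(0,-me_1)}\bigr]\cdot\E\bigl[e^{\w_{-me_1+e_2}}Z^\w_{-me_1+e_2,v}\bigr]$. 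The first factor is an exponential moment of the corrector, which need not be finite under the $L^1$ hypothesis and in any case does not produce the linear quantity $-m\,m_1$ that appears in your sup. What you actually need is control of $\E[\log(\text{infinite sum})]$, and independence of the summands' factors does not help with that.
\item[(ii)] Without ergodicity the almost-sure Laplace heuristic gives $n^{-1}\log$ of the $e_1$-arm sum tending to $\sup_{t\ge0}\bigl[\fe_{\P_0}(\zeta+te_1)-t\,M_1\bigr]$ with $M_1=\E[B(0,e_1)\,|\,\sI]$ random, not $m_1$. One then obtains that the random pair $(M_1,M_2)$ lies on the supergradient curve $\bigcup_{\xi}\partial\fe_{\P_0}(\xi)$; but this curve is the graph of a convex function and is \emph{not} stable under averaging, so passing to $\bfm=\E[(M_1,M_2)]$ does not place $\bfm$ on the curve. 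In other words, the step from the conditional statement to the unconditional one is exactly where the argument needs an idea, and none is supplied.
\end{itemize}
Put differently: convex combinations of future-independent corrector distributions are again future-independent corrector distributions (the $\w$-future marginal is fixed to $\P_0$, so independence survives mixing), which shows that the non-ergodic case cannot be handled by a naive averaging of the ergodic one. Your final paragraph flags the difficulty but does not resolve it; the proof in \cite{Jan-Ras-18-arxiv} presumably uses additional structure (e.g.\ the cocycle variational formula for $\fe_{\P_0}$ or a comparison with the Busemann cocycles constructed there) rather than the bare Laplace estimate you outline.
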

The second is Theorem 4.7 in that paper and gives existence of corrector distributions for each such mean.

\begin{lemma}\label{lm:thm4.7}
For each $\bfb$  with the property that $\bfb\in\partial\fe_{\P_0}(\xi)$ for some $\xi \in]e_1,e_2[$ and for each probability measure $\P_0$ on $\Omega_0$ under which the weights are i.i.d. and in $L^p$ for some $p>2$, there exists a stationary future-independent $L^1$ corrector distribution $\P$ with $\Omega_0$-marginal $\P_0$ such that $\bfm_\P=\bfb$. 
\end{lemma}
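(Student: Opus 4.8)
This is Theorem~4.7 of \cite{Jan-Ras-18-arxiv}; here is the strategy one would follow. The plan has two stages. In the first, for each direction $\xi\in]e_1,e_2[$ I would construct two distinguished corrector distributions $\P^{\xi-}$ and $\P^{\xi+}$ whose mean vectors are the one or two points of $\ext\partial\fe_{\P_0}(\xi)$. In the second, since $\partial\fe_{\P_0}(\xi)$ is the segment joining these extreme points, an arbitrary $\bfb\in\partial\fe_{\P_0}(\xi)$ is written as $\bfb=t\,\bfm_{\P^{\xi-}}+(1-t)\,\bfm_{\P^{\xi+}}$ for some $t\in[0,1]$, and one checks that the mixture $\P:=t\,\P^{\xi-}+(1-t)\,\P^{\xi+}$ is again a stationary future-independent $L^1$ corrector distribution with $\Omega_0$-marginal $\P_0$ and mean $\bfm_\P=\bfb$.

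For the first stage I would take $\P^{\xi\pm}$ to be the law of $(\w,B^{\xi\pm})$, where $B^{\xi\pm}$ is a \emph{Busemann corrector}. The candidate is
\[
B^{\xi}(x,y,\w)=\lim_{n\to\infty}\Bigl(\log Z^\w_{u_n,y}-\log Z^\w_{u_n,x}\Bigr),\qquad u_n=-\fl{n\xi},
\]
a ``root-receding'' analogue of the usual Busemann function; by stochastic monotonicity of the increments $\log Z^\w_{u,y}-\log Z^\w_{u,x}$ in the direction of $u$, the one-sided versions $B^{\xi\pm}$ (obtained by letting the direction approach $\xi$ from the two sides within $]e_1,e_2[$) exist for every $\xi$ and agree with $B^{\xi}$ off a countable set. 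The a.s.\ and $L^1$ existence of these limits is the technical heart; it can be reduced, via the reflection symmetry of $\P_0$ recorded in Section~\ref{sec:statpol}, to the existence of the usual (endpoint-receding) Busemann functions, which rests on the above monotonicity, on the shape theorem \eqref{free energy}, and on the $L^p$, $p>2$, concentration estimates of \cite{Ras-Sep-14} (which upgrade a.s.\ convergence to $L^1$ convergence and yield integrability). Cf.\ the cocycle machinery of \cite{Geo-etal-15}.

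Once the limits exist, the corrector axioms pass to the limit. The cocycle property \eqref{cocycle} is immediate from telescoping; the recovery equation \eqref{recovery} follows by dividing the recursion \eqref{Zrec} with root $u_n$ by $Z^\w_{u_n,x}$ and letting $n\to\infty$; stationarity holds because the weights are i.i.d.\ and the limit is over $u_n\to\infty$, so the joint law of $(\w,B^{\xi\pm})$ is $T_z$-invariant; and future-independence \eqref{future} holds because each increment $\log Z^\w_{u_n,y}-\log Z^\w_{u_n,x}$ is a function of $\{\w_z:u_n\le z\le x\vee y\}$, so $B^{\xi\pm}(x,y)$ is measurable with respect to $\{\w_z:z\le x\vee y\}$, and hence along a down-right path $\bfy$ the south-west increments are functions of weights below $\bfy$ and are thus independent of the north-east weights. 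To identify the means, a shape-theorem computation along the ray $\R_+\xi$ gives $\xi\cdot\bfm_{\P^{\xi\pm}}=\fe_{\P_0}(\xi)$, which combined with the fact (Lemma~4.5(a) of \cite{Jan-Ras-18-arxiv}, recorded above) that $\bfm_{\P^{\xi\pm}}\in\partial\fe_{\P_0}(\zeta)$ for some $\zeta$ forces $\bfm_{\P^{\xi\pm}}\in\partial\fe_{\P_0}(\xi)$; and monotonicity in the direction identifies $\bfm_{\P^{\xi-}},\bfm_{\P^{\xi+}}$ with the two one-sided limits of $\nabla\fe_{\P_0}(\zeta)$ as $\zeta\to\xi$ along $]e_1,e_2[$, i.e.\ with the two points of $\ext\partial\fe_{\P_0}(\xi)$.

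For the second stage, the only nontrivial point is that future-independence \eqref{future} survives the mixing, since independence is not in general preserved by convex combinations. It is preserved here because every component has the \emph{same} $\Omega_0$-marginal $\P_0$: writing $U$ for the south-west increments and $V$ for the north-east weights along a down-right path, the law of $V$ is governed by $\P_0$ alone, hence identical under $\P^{\xi-}$ and $\P^{\xi+}$, so
\[
\P(U\in A,\,V\in B)=t\,\P^{\xi-}(U\in A)\,\P^{\xi-}(V\in B)+(1-t)\,\P^{\xi+}(U\in A)\,\P^{\xi+}(V\in B)=\P(U\in A)\,\P(V\in B).
\]
The prescribed marginal, stationarity, integrability, and the a.s.\ properties \eqref{cocycle}, \eqref{recovery} are preserved trivially, and $\bfm_\P=t\,\bfm_{\P^{\xi-}}+(1-t)\,\bfm_{\P^{\xi+}}=\bfb$ since $\P\mapsto\bfm_\P$ is affine. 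The main obstacle is thus entirely in the first stage: establishing the a.s.\ and $L^1$ existence of the Busemann limits and showing their means are exactly the extreme points of $\partial\fe_{\P_0}(\xi)$. Existence hinges on monotonicity in the direction together with regularity of the point-to-point free energy near $]e_1,e_2[$, and the $L^1$ convergence and finiteness of the means is exactly where the hypothesis of more than two moments enters.
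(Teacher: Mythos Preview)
The paper does not itself prove this statement; it is simply imported as Theorem~4.7 of the companion paper \cite{Jan-Ras-18-arxiv}, exactly as you note at the outset. Your sketch of the Busemann-function construction for the extremal correctors $\P^{\xi\pm}$, followed by convex mixing to reach interior points of $\partial\fe_{\P_0}(\xi)$ (with the correct observation that future-independence survives mixing because the $\Omega_0$-marginal is common), is a faithful outline of the argument in that reference.
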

Let $\sC_{\P_0}$ denote the collection of all stationary future-independent $L^1$ corrector distributions with $\Omega_0$-marginal $\P_0$. 
In words, the last lemma says that as $\P$ varies over $\sC_{\P_0}$,  its mean vector $\bfm_\P$ spans all of $\bigcup_{\xi\in]e_1,e_2[}\partial\fe_{\P_0}(\xi)$. 
The next result is an immediate consequence of Lemmas 4.7(b), 4.7(d), and C.1 in \cite{Jan-Ras-18-arxiv}. It 
says that in fact as $\P$ spans $\sC_{\P_0}$ each coordinate of $\bfm_\P$ 
spans $(\E_0[\w_0],\infty)$.

\begin{lemma}\label{lm:span}
Fix an i.i.d.\ probability measure $\P_0$ on $(\Omega_0,\cF_0)$ with $L^p$ weights, for some $p>2$. Then $\{\bfm_\P: \P\in \sC_{\P_0}\}$ is a closed curve in $\R^2$ and  for each $i \in \{1,2\}$,  $\{\bfm_\P \cdot e_i : \P \in \sC_{\P_0}\} = (\E_0[\w_0],\infty)$.
\end{lemma}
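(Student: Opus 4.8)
The statement to prove, Lemma \ref{lm:span}, asserts two things: that $\{\bfm_\P : \P \in \sC_{\P_0}\}$ is a closed curve in $\R^2$, and that for each $i\in\{1,2\}$ the projection $\{\bfm_\P\cdot e_i : \P\in\sC_{\P_0}\}$ equals $(\E_0[\w_0],\infty)$. The text says this follows immediately from Lemmas 4.7(b), 4.7(d), and C.1 of the companion paper \cite{Jan-Ras-18-arxiv}, so the proof should be a short assembly of those inputs together with Lemma \ref{lm:thm4.7} (already stated in the excerpt as Theorem 4.7 of the companion, giving existence for each $\bfb\in\bigcup_{\xi\in]e_1,e_2[}\partial\fe_{\P_0}(\xi)$) and the lemma just before it characterizing mean vectors as living in $\bigcup_{\xi\in]e_1,e_2[}\partial\fe_{\P_0}(\xi)$.

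The plan is as follows. First I would note that by the two preceding lemmas, $\{\bfm_\P : \P\in\sC_{\P_0}\} = \bigcup_{\xi\in]e_1,e_2[}\partial\fe_{\P_0}(\xi)$ exactly: one inclusion is the characterization lemma (Lemma 4.5(a)), the other is the existence lemma \ref{lm:thm4.7}. So the problem reduces to an analysis of the set $\Gamma := \bigcup_{\xi\in]e_1,e_2[}\partial\fe_{\P_0}(\xi)$, which is a purely convex-analytic object attached to the concave $1$-homogeneous function $\fe_{\P_0}$. Since $\fe_{\P_0}$ is $1$-homogeneous, $\partial\fe_{\P_0}(\xi)$ depends only on the direction of $\xi$, so $\Gamma$ is parametrized by $t\in(0,1)$ via $\xi_t = te_1+(1-t)e_2$. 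By concavity, the superdifferentials are nested/monotone in a suitable sense and their union traces out the graph of the (multivalued) gradient map; this is the standard picture of the boundary of the superdifferential of a concave homogeneous function being a curve. I would invoke Lemma C.1 of \cite{Jan-Ras-18-arxiv} for the precise statement that $\Gamma$ is a closed curve (e.g. the image of a continuous closed path), which handles the first assertion.

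For the second assertion I would use the monotonicity of the correspondence $t\mapsto \partial\fe_{\P_0}(\xi_t)$ together with the endpoint behavior. The key facts are: (i) as $t\to 1$, the relevant coordinate $\bfm\cdot e_1$ over $\bfm\in\partial\fe_{\P_0}(\xi_t)$ decreases toward the value at the ``$e_1$ direction,'' which is the annealed bound $\E_0[\w_0]$ (this is where the moment hypothesis $p>2$ and the identification of the free energy near the axes enters — Lemma 4.7(b) or (d)); and (ii) as $t\to 0$, $\bfm\cdot e_1\to\infty$ (the free energy grows superlinearly in the transverse direction, so the gradient blows up). Combined with the fact that $\Gamma$ is connected (being a curve), the intermediate value theorem gives that the projection onto $e_1$ covers the full open interval $(\E_0[\w_0],\infty)$; the value $\E_0[\w_0]$ itself is not attained because $]e_1,e_2[$ is open and the endpoint limits are not included, and values at or below $\E_0[\w_0]$ are excluded by the annealed lower bound on the free energy. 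By the $e_1\leftrightarrow e_2$ reflection symmetry of $\P_0$ noted in Section \ref{sec:statpol}, the same holds for the $e_2$ coordinate.

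The main obstacle — or rather the only non-bookkeeping point — is pinning down the two endpoint limits of the coordinate projections of $\Gamma$, i.e. that the infimum of $\bfm_\P\cdot e_i$ over $\sC_{\P_0}$ is exactly $\E_0[\w_0]$ and the supremum is $+\infty$. The supremum being infinite is the easier half and follows from superlinear growth of $\fe_{\P_0}$ away from the axes (equivalently, unboundedness of the gradient), which is part of the cited Lemma 4.7. The infimum being $\E_0[\w_0]$ rather than something larger is the more delicate half: it requires knowing that $\fe_{\P_0}$ is asymptotically linear with slope $\E_0[\w_0]$ as $\xi$ approaches an axis, which is exactly the content of the boundary-behavior statements in \cite{Jan-Ras-18-arxiv} (and ultimately rests on the $p>2$ moment assumption via \cite{Ras-Sep-14}). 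Since all of these are quoted results, the proof itself is short: assemble the set identity $\{\bfm_\P\}=\Gamma$, quote Lemma C.1 for the closed-curve structure, quote Lemmas 4.7(b),(d) for the endpoint coordinate limits, and conclude the projection statement by connectedness plus the reflection symmetry.
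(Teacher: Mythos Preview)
Your proposal is correct and matches the paper's approach: the paper gives no proof at all beyond the sentence ``an immediate consequence of Lemmas 4.7(b), 4.7(d), and C.1 in \cite{Jan-Ras-18-arxiv},'' and your write-up is a faithful unpacking of how those three inputs combine with the two preceding lemmas to yield both assertions. The only cosmetic remark is that ``closed curve'' here means a curve that is a closed subset of $\R^2$ (not a loop), which is consistent with your endpoint analysis since as one coordinate tends to $\E_0[\w_0]$ the other tends to $+\infty$.
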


\smallskip

Our second main result in this paper gives a convenient tool which allows us to identify ergodic stationary corrector distributions.
\begin{theorem}\label{thm:ergodicity}
Fix an i.i.d.\ probability measure $\P_0$ on $(\Omega_0,\cF_0)$ with $L^p$ weights, for some $p>2$.
Suppose  $\P$ is a stationary future-independent corrector distribution with 
$\Omega_0$-marginal $\P_0$. If $\bfm_\P\in\ext \partial \fe_{\P_0}(\xi)$ for  some $\xi\in]e_1,e_2[$, then
$\P$ is ergodic under  $T_{e_1}$ and $T_{e_2}$.
\end{theorem}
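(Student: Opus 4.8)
The plan is to derive the ergodicity of $\P$ from the uniqueness statement of Theorem~\ref{thm:uniqueness} by an ergodic-decomposition argument, in which the extremality hypothesis on $\bfm_\P$ is used to pin down the mean vectors of the ergodic components.

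Fix $i\in\{1,2\}$. Since $(\Omega,\cF)$ is standard Borel and $\P$ is $T_{e_i}$-invariant, I would first take the ergodic decomposition $\P=\int\mu\,\nu(d\mu)$, where $\nu$ is a probability measure carried by the $T_{e_i}$-invariant, $T_{e_i}$-ergodic probability measures on $(\Omega,\cF)$, and then argue that $\nu$-almost every $\mu$ is again a stationary future-independent $L^1$ corrector distribution with $\Omega_0$-marginal $\P_0$. Part of this is routine: the almost sure cocycle and recovery relations \eqref{cocycle}--\eqref{recovery} and the bound $\E[\abs{B(0,e_j)}]<\infty$ descend to $\nu$-a.e.\ $\mu$, because a $\P$-null event is $\mu$-null and a $\P$-integrable function is $\mu$-integrable for $\nu$-a.e.\ $\mu$. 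That $\mu$ has $\Omega_0$-marginal $\P_0$ uses that $\P_0$ is i.i.d., hence mixing (so ergodic) under $T_{e_i}$: for bounded functions of the weights Birkhoff's theorem gives, $\P$-a.s., that the $T_{e_i}$-ergodic averages converge to $\E_0[\,\cdot\,]$ as well as to the $\mu$-expectation, so these agree and $\mu$ has $\Omega_0$-marginal $\P_0$ for $\nu$-a.e.\ $\mu$.

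The delicate point --- and the step I expect to be the main obstacle --- is that the two remaining defining properties of a corrector distribution, namely full two-dimensional stationarity (invariance under $T_{e_{3-i}}$ as well) and future-independence \eqref{future}, are not obviously stable under conditioning on the $T_{e_i}$-invariant $\sigma$-algebra $\mathcal{I}_{e_i}$. I would attack this by showing (i) that $\mathcal{I}_{e_i}$ agrees modulo $\P$-null sets with the full $\Z^2$-invariant $\sigma$-algebra, which forces the components to be $T_{e_{3-i}}$-invariant; and (ii) that $\mathcal{I}_{e_i}$ is generated modulo $\P$-null sets by data that is independent of the weights lying strictly above any given down-right path, which forces the components to be future-independent. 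Both should come from the interplay between the i.i.d.\ structure of $\P_0$ --- which makes the weight field mixing under \emph{every} lattice translation, so that $T_{e_i}$-invariant functions of the weights alone are a.s.\ constant --- and future-independence of $\P$ itself: by the recovery equation \eqref{recovery} the weights weakly below a down-right path are measurable with respect to the $B$-field below that path, so future-independence ties all ``sub-path'' data to a field that is independent of the weights above it, and triviality and measurability facts proved on the weight field can be transported to the correctors. This is exactly where the line-boundary constructions of Section~\ref{sec:aux} enter, together with the uniqueness statement of Lemma~\ref{lem:Yunique} (cf.\ Lemma~\ref{lm:lineBurke}): they allow one to identify an ergodic component with the corrector distribution reconstructed from its boundary data along a horizontal line.

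Granting that the components are genuine corrector distributions, the conclusion follows from convex geometry plus uniqueness. For $\nu$-a.e.\ $\mu$ the mean vector $\bfm_\mu$ is defined, and integrating $\E_\P[B(0,e_j)]=\int\E_\mu[B(0,e_j)]\,\nu(d\mu)$ over $j\in\{1,2\}$ (the integrand is $\nu$-integrable since $\int\E_\mu[\abs{B(0,e_j)}]\,\nu(d\mu)=\E_\P[\abs{B(0,e_j)}]<\infty$) gives $\bfm_\P=\int\bfm_\mu\,\nu(d\mu)$. By Lemma~4.5(a) of \cite{Jan-Ras-18-arxiv}, each $\bfm_\mu$ lies in $\partial\fe_{\P_0}(\xi_\mu)$ for some $\xi_\mu\in]e_1,e_2[$, hence in the closed convex set $K=\{\bfb\in\R^2:\bfb\cdot\zeta\ge\fe_{\P_0}(\zeta)\text{ for all }\zeta\in\R_+^2\}$; indeed $1$-homogeneity and concavity of $\fe_{\P_0}$ yield $\partial\fe_{\P_0}(\zeta)=\{\bfb\in K:\bfb\cdot\zeta=\fe_{\P_0}(\zeta)\}$ and $\fe_{\P_0}(\zeta)=\min_{\bfb\in K}\bfb\cdot\zeta$ for every $\zeta\in(0,\infty)^2$, so $\partial\fe_{\P_0}(\xi)$ is the face of $K$ on which $\bfb\mapsto\bfb\cdot\xi$ attains its minimum over $K$. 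Since $\bfm_\P$ lies on this face while the pushforward of $\nu$ under $\mu\mapsto\bfm_\mu$ is supported on $K$, and $\bfb\cdot\xi\ge\fe_{\P_0}(\xi)$ on $K$ with equality only on the face, the pushforward is supported on the face; that is, $\bfm_\mu\in\partial\fe_{\P_0}(\xi)$ for $\nu$-a.e.\ $\mu$. Finally $\partial\fe_{\P_0}(\xi)$ is a (possibly degenerate) line segment with extreme-point set $\ext\partial\fe_{\P_0}(\xi)$, and $\bfm_\P$ is one of its extreme points by hypothesis; a point of a segment equal to the barycenter of a probability measure on that segment must, being an endpoint, equal that endpoint almost surely, so $\bfm_\mu=\bfm_\P$ for $\nu$-a.e.\ $\mu$. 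Thus $\nu$-a.e.\ $\mu$ is a stationary $T_{e_i}$-ergodic future-independent $L^1$ corrector distribution with $\Omega_0$-marginal $\P_0$ and $\bfm_\mu\cdot e_i=\bfm_\P\cdot e_i$; by Theorem~\ref{thm:uniqueness} there is at most one such measure, so $\nu$ is a point mass and $\P$ itself is $T_{e_i}$-ergodic. Since $i\in\{1,2\}$ was arbitrary, $\P$ is ergodic under both $T_{e_1}$ and $T_{e_2}$.
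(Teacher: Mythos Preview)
Your approach is genuinely different from the paper's, and the gap you yourself flag is real and unresolved: you do not prove that the $T_{e_i}$-ergodic components of $\P$ are again stationary future-independent corrector distributions with $\Omega_0$-marginal $\P_0$. Invariance under $T_{e_{3-i}}$ and future-independence for each component are not established; your sketch (that $\mathcal{I}_{e_i}$ coincides mod $\P$-null sets with the full $\Z^2$-invariant $\sigma$-algebra, and is generated by sub-path data) is plausible but not carried out, and your reference to Lemma~\ref{lm:lineBurke} is misplaced since that lemma is specific to log-gamma weights. Without this structural step you cannot invoke Lemma~4.5(a) of \cite{Jan-Ras-18-arxiv} on the components, and your convex-geometry reduction (which is otherwise correct and pleasant) collapses.

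The paper sidesteps this difficulty entirely by decomposing not $\P$ but the one-dimensional \emph{boundary distribution} $\mu$ of $\{B(ne_1,(n+1)e_1):n\in\Z\}$ on $\R^\Z$. Lemma~\ref{lm:aux} shows that $\P$ is a measurable image of $\mu\otimes\Gamma^{\otimes(\Z\times\Z_+)}$, so ergodicity of $\mu$ under the one-dimensional shift forces $T_{e_1}$-ergodicity of $\P$. The extremality hypothesis is also used differently: rather than a barycenter-on-a-face argument, the paper quotes Theorem~4.4 and Lemma~4.5(c) of \cite{Jan-Ras-18-arxiv} to obtain the almost-sure limit $n^{-1}B(0,ne_1)\to\bfm_\P\cdot e_1$, which is exactly hypothesis~\eqref{erg-lim} of Lemma~\ref{lem:ergofixed}. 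That lemma then performs the ergodic decomposition of $\mu$ (where there is nothing to check about two-dimensional stationarity or future-independence), and the contraction property of the update map $\Phi$ (Proposition~\ref{prop:contraction}) forces every ergodic component of $\mu$ to be a $\Phi$-fixed point with the same mean, whence $\mu$ is ergodic by Corollary~\ref{cor:unique}. The one-dimensional reduction thus replaces the structural questions you raise by the purely dynamical Lemma~\ref{lem:ergofixed}.
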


In principle, this result leaves open the possibility that there could be ergodic stationary distributions with mean vectors which are not extreme points of the superdifferential of the free energy. Nevertheless, in our setting, it is expected that $\fe_{\P_0}$ is differentiable, in which case every element of the superdifferential would be extreme. In particular, under this hypothesis, 
for each $\xi\in]e_1,e_2[$, Lemma \ref{lm:thm4.7} furnishes a future-independent $L^1$ corrector distribution $\P_\xi$ with $\Omega_0$-marginal $\P_0$ such that $\bfm_\P=\nabla\fe_{\P_0}(\xi)$. Then, under the hypothesis of differentiability, using Theorems \ref{thm:uniqueness} and \ref{thm:ergodicity} we have the following complete characterization of all ergodic stationary polymer meausures.

\begin{corollary}
Fix an i.i.d.\ probability measure $\P_0$ on $(\Omega_0,\cF_0)$ with $L^p$ weights, for some $p>2$. Assume $\fe_{\P_0}$ is differentiable on $(0,\infty)^2$. Then for $i \in \{1,2\}$, the collection of $T_{e_i}$-ergodic stationary corrector distributions is exactly given by $\{\P_\xi : \xi \in ]e_1,e_2[\}$. In particular, for each $\xi\in]e_1,e_2[$ and each $i\in\{1,2\}$, $\P_\xi$ is the unique $T_{e_i}$-ergodic future-independent corrector distribution with $\Omega_0$-marginal $\P_0$ and such that $\bfm_\P=\nabla\fe_{\P_0}(\xi)$.
\end{corollary}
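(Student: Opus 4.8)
The plan is to assemble four ingredients that are already in hand: the existence Lemma~\ref{lm:thm4.7}, the ergodicity criterion Theorem~\ref{thm:ergodicity}, the uniqueness Theorem~\ref{thm:uniqueness}, and the characterization of attainable mean vectors (the first lemma stated above, i.e.\ Lemma~4.5(a) of \cite{Jan-Ras-18-arxiv}). The differentiability hypothesis enters only through \eqref{extremes}: it says that for every $\xi\in(0,\infty)^2$ we have $\partial\fe_{\P_0}(\xi)=\ext\partial\fe_{\P_0}(\xi)=\{\nabla\fe_{\P_0}(\xi)\}$, so a one-point superdifferential is automatically extreme and any mean vector lying in some $\partial\fe_{\P_0}(\xi)$ must equal $\nabla\fe_{\P_0}(\xi)$. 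Throughout, ``$T_{e_i}$-ergodic stationary corrector distribution'' is read as ``$T_{e_i}$-ergodic stationary future-independent $L^1$ corrector distribution with $\Omega_0$-marginal $\P_0$'', matching the hypotheses of the cited results.

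First I would show each $\P_\xi$ is $T_{e_i}$-ergodic. Fix $\xi\in]e_1,e_2[\subset(0,\infty)^2$. Since $\nabla\fe_{\P_0}(\xi)\in\partial\fe_{\P_0}(\xi)$ by \eqref{extremes}, Lemma~\ref{lm:thm4.7} produces a stationary future-independent $L^1$ corrector distribution $\P_\xi$ with $\Omega_0$-marginal $\P_0$ and $\bfm_{\P_\xi}=\nabla\fe_{\P_0}(\xi)$. As $\bfm_{\P_\xi}\in\ext\partial\fe_{\P_0}(\xi)$, Theorem~\ref{thm:ergodicity} gives that $\P_\xi$ is ergodic under both $T_{e_1}$ and $T_{e_2}$. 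Hence $\{\P_\xi:\xi\in]e_1,e_2[\}$ is contained in the family of $T_{e_i}$-ergodic stationary corrector distributions, for each $i\in\{1,2\}$. Moreover, since $\P_\xi$ is $T_{e_i}$-ergodic with $\bfm_{\P_\xi}\cdot e_i=\nabla\fe_{\P_0}(\xi)\cdot e_i$, Theorem~\ref{thm:uniqueness} shows it is the \emph{only} such distribution with that value of the $i$-th coordinate of the mean; this proves the final ``in particular'' sentence (a fortiori, since there the whole vector $\bfm_\P=\nabla\fe_{\P_0}(\xi)$ is prescribed) and also shows $\P_\xi$ is independent of the choice furnished by Lemma~\ref{lm:thm4.7}.

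For the reverse inclusion, let $\P$ be an arbitrary $T_{e_i}$-ergodic stationary future-independent $L^1$ corrector distribution with $\Omega_0$-marginal $\P_0$; in particular $\P\in\sC_{\P_0}$. By Lemma~4.5(a) of \cite{Jan-Ras-18-arxiv} there is $\xi\in]e_1,e_2[$ with $\bfm_\P\in\partial\fe_{\P_0}(\xi)$, hence $\bfm_\P=\nabla\fe_{\P_0}(\xi)$ by \eqref{extremes}. Then $\P$ and $\P_\xi$ are both $T_{e_i}$-ergodic and satisfy $\bfm_\P\cdot e_i=\bfm_{\P_\xi}\cdot e_i$, so Theorem~\ref{thm:uniqueness} forces $\P=\P_\xi$. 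Combining the two inclusions yields the asserted set equality.

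There is no genuine obstacle here; the substance lies entirely in the quoted results. The two points requiring care are (i) using the differentiability hypothesis to upgrade the membership $\bfm_\P\in\partial\fe_{\P_0}(\xi)$ to the equality $\bfm_\P=\nabla\fe_{\P_0}(\xi)$ (so that two $T_{e_i}$-ergodic distributions cannot share a mean by accident of the superdifferential being multivalued), and (ii) invoking Theorem~\ref{thm:uniqueness} in precisely the form stated, namely that a single coordinate $\bfm_\P\cdot e_i$ already pins down the $T_{e_i}$-ergodic corrector distribution, which is exactly what is available once the mean has been identified as $\nabla\fe_{\P_0}(\xi)$.
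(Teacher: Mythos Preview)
Your proposal is correct and matches the paper's approach: the corollary is stated in the paper without an explicit proof, being presented as an immediate consequence of Theorems~\ref{thm:uniqueness} and~\ref{thm:ergodicity} together with Lemma~\ref{lm:thm4.7} and the differentiability hypothesis via \eqref{extremes}, exactly the ingredients you assemble. Your write-up simply spells out the two inclusions that the paper leaves implicit.
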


 As a consequence, the above and Lemma \ref{lm:span} imply that the one-parameter family of ergodic 
 measures constructed in \cite{Sep-12-corr} is unique
 and assumption (2-6) of the scaling theory in \cite{Spo-14} is satisfied.

\section{Preliminaries}\label{sec:prelims}
In this section, we motivate the main tool in the proofs of the main 
theorems \ref{thm:uniqueness} and \ref{thm:ergodicity}, which we will call the {\sl update map} $\Phi$. 
\smallskip 

Given $B:\Z^2\times\Z^2\to\R$ define the random variables $V_{n,k}=e^{\w_{(n,k)}}$, $X_{n,k}=e^{B((n,k-1),(n+1,k-1))}$, and $Y_{n,k}=e^{B((n,k-1),(n,k))-\w_{(n,k)}}$
for $n,k\in\Z$. The next lemma rewrites the corrector property in terms of this notation.  Compare \eqref{Lindley} with \eqref{B-Lindley}.

\begin{lemma}\label{lm:Lindley}
$B$ is a corrector if and only if the following hold for all $n,k\in\Z$:
\begin{align}\label{Lindley}
Y_{n+1,k} = 1 + \frac{V_{n,k}}{X_{n,k}}Y_{n,k}\quad\text{and}\quad
X_{n,k+1} = V_{n+1,k}\Bigl(1 + \frac{X_{n,k}}{V_{n,k}Y_{n,k}}\Bigr).
\end{align}
\end{lemma}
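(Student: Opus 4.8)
The plan is to unwind \eqref{Lindley} into two scalar identities concerning $B$ on nearest‑neighbor edges of a single unit square, and then to verify that, in the presence of the cocycle property \eqref{cocycle}, those two identities are together equivalent to the recovery property \eqref{recovery}. Since a corrector is by definition a cocycle satisfying \eqref{recovery}, this yields the stated equivalence (the cocycle property is in force throughout the discussion of this section, so for the ``if'' direction it may be used freely).

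First I would fix $n,k\in\Z$ and set $x=(n+1,k)$, so that the sites occurring in \eqref{Lindley} are exactly the corners $x$, $x-e_1=(n,k)$, $x-e_2=(n+1,k-1)$, $x-e_1-e_2=(n,k-1)$ of a unit square. Substituting the definitions of $V_{n,k},X_{n,k},Y_{n,k}$ and applying the cocycle property once to cancel the common base point $x-e_1-e_2$, one gets $\tfrac{V_{n,k}}{X_{n,k}}Y_{n,k}=e^{B(x-e_1-e_2,\,x-e_1)-B(x-e_1-e_2,\,x-e_2)}=e^{B(x-e_2,\,x-e_1)}$, while $Y_{n+1,k}=e^{B(x-e_2,\,x)-\w_x}$ and $X_{n,k+1}=e^{B(x-e_1,\,x)}$, and $\tfrac{X_{n,k}}{V_{n,k}Y_{n,k}}=e^{B(x-e_1,\,x-e_2)}$. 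Hence the two identities in \eqref{Lindley} are equivalent, respectively, to
\[
e^{B(x-e_2,\,x)}=e^{\w_x}\bigl(1+e^{B(x-e_2,\,x-e_1)}\bigr),
\qquad
e^{B(x-e_1,\,x)}=e^{\w_x}\bigl(1+e^{B(x-e_1,\,x-e_2)}\bigr).
\]
As $(n,k)$ ranges over $\Z^2$, so does $x$, so \eqref{Lindley} (for all $n,k$) is the same as the pair of displayed identities (for all $x\in\Z^2$). The only thing requiring attention here is bookkeeping: correctly matching each $(n,k)$‑indexed quantity to its corner of the square with apex $x=(n+1,k)$, and tracking the shift built into $Y_{n,k}$; once the square is drawn the cocycle cancellations are immediate.

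Next, writing $a=B(x-e_1,\,x-e_2)$ and using the cocycle property in the forms $B(x-e_2,\,x-e_1)=-a$ and $B(x-e_1,\,x)=B(x-e_2,\,x)+a$, I would check the equivalence of the two displayed identities with \eqref{recovery} at $x$ by a purely algebraic computation. For the forward implication: from the two identities, $e^{-B(x-e_2,\,x)}+e^{-B(x-e_1,\,x)}=e^{-\w_x}\bigl(\tfrac{1}{1+e^{-a}}+\tfrac{1}{1+e^{a}}\bigr)=e^{-\w_x}$, since $\tfrac{1}{1+e^{-a}}+\tfrac{1}{1+e^{a}}=1$; this is \eqref{recovery}. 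For the converse: substituting $B(x-e_1,\,x)=B(x-e_2,\,x)+a$ into \eqref{recovery} gives $e^{-B(x-e_2,\,x)}(1+e^{-a})=e^{-\w_x}$, which rearranges to the first displayed identity, and multiplying that identity by $e^{a}$ produces the second.

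Assembling the two steps gives the lemma: if $B$ is a corrector it is a cocycle satisfying \eqref{recovery}, so the displayed identities hold and hence \eqref{Lindley} holds; conversely, if \eqref{Lindley} holds then (using the cocycle property) \eqref{recovery} holds, so $B$ is a corrector. I do not expect a genuine obstacle here — the argument is a change of variables plus a one‑line algebraic identity; the only real risk is an index slip in Step 1, which a figure of the unit square eliminates.
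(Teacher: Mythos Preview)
Your proof is correct and follows essentially the same route as the paper: both arguments rewrite \eqref{Lindley} as two identities on the edges of a unit square with apex $x=(n+1,k)$ and then check algebraically that these are equivalent to the recovery property \eqref{recovery}. The only organizational difference is that the paper keeps the exponents in the unreduced form $B(x-e_1-e_2,x-e_1)-B(x-e_1-e_2,x-e_2)$ so that the around-the-square cocycle identity falls out of the two equations as well, whereas you assume the cocycle property up front and simplify to $B(x-e_2,x-e_1)$; since you flag this explicitly, the argument is fine.
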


\begin{proof}
The cocycle property \eqref{cocycle} is equivalent to 
\begin{align*}
B(x-e_2,x) - B(x-e_1,x)= B(x-e_1-e_2,x-e_1)-B(x-e_1-e_2,x-e_2)
\end{align*}
for all $x\in\Z^2$.
Together, the cocycle and the recovery properties \eqref{cocycle} and \eqref{recovery} are equivalent to
\begin{align*}
&e^{B(x-e_2,x)-\w_x} 
=1+e^{B(x-e_1-e_2,x-e_1)-\w_{x-e_1}}e^{\w_{x-e_1}}e^{-B(x-e_1-e_2,x-e_2)}\quad\text{and}\\
&e^{B(x-e_1,x)} =e^{\w_x}\Bigl(1+ e^{B(x-e_1-e_2,x-e_2)-\w_{x-e_2}}e^{\w_{x-e_2}}e^{-B(x-e_1-e_2,x-e_1)}\Bigr)
\end{align*}
holding for all $x\in\Z^2$.
Plug in $x=(n+1,k)$ in the first equation and $x=(n,k)$ in the second one, then apply the definitions of $V$, $X$, and $Y$.
\end{proof}

We will see below that iterating the first equation in \eqref{Lindley} gives
\begin{align}\label{Y-recursion}
Y_{n+1,k} = 1 + \sum_{j=-\infty}^{n} \prod_{i=j}^{n} \frac{V_{i,k}}{X_{i,k}}\quad\text{for all }n,k\in\Z.
\end{align}
Compare with \eqref{LindB}.

Suppose now $\{\w_x:x\in\Z^2\}$ have an i.i.d.\ distribution $\P_0$. Then $\{V_{n,k}:n,k\in\Z\}$ are also i.i.d.  Suppose  $\{X_{n,0}:n\in\Z\}$ are independent of the $V$ variables and have a stationary probability distribution $\mu$. 
Once the variables $\{V_{n,k}:n,k\in\Z\}$ and $\{X_{n,0}:n\in\Z\}$ are known, the rest of the variables $\{Y_{n,k}:n\in\Z,k\in\Z_+\}$ and $\{X_{n,k}:n\in\Z,k\in\N\}$ can be computed via \eqref{Y-recursion} and the second equation in \eqref{Lindley}.
The resulting process $\{V_{n,k},X_{n,k},Y_{n,k}:n\in\Z,k\in\Z_+\}$ is clearly stationary under shifts in the $n$ index.    

Let $\Phi(\mu)$ be the distribution of $\{X_{n,1}:n\in\Z\}$.
By the equivalence of stationary corrector distributions and stationary polymer measures with boundary, discussed in Section \ref{sec:bdry} 
(with here $\bfy=\Z e_1$),
the problem of finding a stationary corrector distribution $\P$ with $\Omega_0$-marginal $\P_0$ is  the same as finding a stationary fixed point for the map $\Phi$.
Indeed, this will ensure that the $(V,X,Y)$ process is stationary under shifts in the $k$ index. 
Inspecting the dependence on $V$ in \eqref{Lindley} and \eqref{Y-recursion} one can quickly see that $\P$ will also satisfy the future-independence property.

The purpose of the next section is to show that $\Phi$ is a mean-preserving contraction and hence has at most one fixed point with a given mean.


\section{Properties of the update map}\label{sec:aux}
%
%
%

The properties of the update map $\Phi$ that we prove here require fewer assumptions than our main results. Hence, this section has its own setting and notation, which we introduce next.\smallskip  

We are given a stationary process $X=\{X_n : n \in \Z\}$ and an i.i.d.\ sequence $V=\{V_n : n \in \Z\}$ with $X_0 > 0$ and  
$V_0 > 0$ almost surely. Assume the two families are independent of each other and denote their joint distribution by $P$, with expectation $E$. Suppose $E[|\log X_0 |] < \infty$ and $E[|\log V_0|] < \infty$. Let $\sI$ denote the shift-invariant $\sigma$-algebra of the process $\{(X_n, V_n) : n \in \Z\}$ and assume that 
	\begin{align}\label{eq:condineq}
	E[\log X_0 \,|\, \sI] > E[\log V_0]\quad P\text{-almost surely.}
	\end{align} 

\begin{lemma} \label{lem:cond0lim}
Suppose that $Y=\{Y_n:n\in\Z\}$  satisfies the recursion
\begin{align} \label{eq:Y0recursion}
Y_{n+1} &= 1 + V_nX_n^{-1}Y_n,\quad P\text{-almost surely and for all }n\in\Z.
\end{align}
Then $n^{-1} \one\{0<Y_0<\infty\}\log Y_n\to 0$ almost surely.
\end{lemma}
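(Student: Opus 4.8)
The plan is to exploit the explicit solution to the linear recursion \eqref{eq:Y0recursion}. On the event $\{0 < Y_0 < \infty\}$, iterating the recursion backward from index $n$ down to index $0$ gives
\begin{align*}
Y_n = \Bigl(\prod_{i=0}^{n-1}\frac{V_i}{X_i}\Bigr)Y_0 + \sum_{j=0}^{n-1}\prod_{i=j}^{n-1}\frac{V_i}{X_i},
\end{align*}
for $n\ge 1$ (with the empty product equal to $1$). First I would observe that all terms on the right are nonnegative, so $\log Y_n \ge 0$ once $n$ is large enough that the sum is at least $1$; in fact $Y_n \ge 1$ for all $n\ge 1$ directly from the recursion. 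For the upper bound, I would bound the sum crudely by $n\max_{0\le j\le n-1}\prod_{i=j}^{n-1}(V_i/X_i)$, so that
\begin{align*}
0 \le \frac1n\log Y_n \le \frac{\log n}{n} + \frac1n\Bigl(\log^+ Y_0\Bigr) + \max_{0\le j\le n-1}\frac1n\sum_{i=j}^{n-1}\bigl(\log V_i - \log X_i\bigr).
\end{align*}

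The heart of the argument is then to show that this last maximum tends to $0$ almost surely. Write $W_i = \log X_i - \log V_i$ and $S_k = \sum_{i=0}^{k-1} W_i$, so the maximum equals $\max_{0\le j\le n-1}\tfrac1n(S_n - S_j) = \tfrac1n(S_n - \min_{0\le j\le n-1} S_j)$. Both families $\{X_n\}$ and $\{V_n\}$ are stationary (the $V$'s are even i.i.d.) and independent of each other, so $\{W_n\}$ is stationary with $E[|W_0|]<\infty$. By Birkhoff's ergodic theorem, $S_n/n \to E[W_0\mid\sI]$ almost surely, where $\sI$ is the shift-invariant $\sigma$-algebra of $\{(X_n,V_n)\}$. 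The hypothesis \eqref{eq:condineq} says precisely that $E[W_0\mid\sI] = E[\log X_0\mid\sI] - E[\log V_0] > 0$ almost surely. The remaining point is that $\tfrac1n\min_{0\le j\le n-1}S_j \to 0$ on the event $\{E[W_0\mid\sI]>0\}$: since $S_j/j$ converges to a strictly positive limit, the walk $S_j$ is eventually increasing to $+\infty$, so its running minimum over $j\in\{0,\dots,n-1\}$ is eventually a fixed finite (random) number, whence $\tfrac1n\min_j S_j \to 0$. Therefore $\tfrac1n(S_n - \min_{j<n}S_j) \to E[W_0\mid\sI] - 0$... wait — I must be careful about the sign: $\max_j \tfrac1n(S_n - S_j)$ with $W_i = \log V_i - \log X_i$ negated; let me keep $W_i = \log V_i - \log X_i$, so the bound above is $\max_{0\le j\le n-1}\tfrac1n\sum_{i=j}^{n-1}W_i$ and $E[W_0\mid\sI]<0$ almost surely by \eqref{eq:condineq}. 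Then $\sum_{i=j}^{n-1}W_i \le \sum_{i=0}^{n-1}W_i - \min_{0\le j\le n-1}\sum_{i=0}^{j-1}W_i$, the first term is $nE[W_0\mid\sI]+o(n)$ which is negative, and the running minimum of the partial sums $S_j$ of a walk drifting to $-\infty$ is itself $\sim nE[W_0\mid\sI]$, so the difference is $o(n)$, even $O(1)$ is false but $o(n)$ suffices; more simply, $\max_{0\le j\le n-1}\tfrac1n S_{[j,n)} \le \tfrac1n S_n - \tfrac1n\min_{j\le n}S_j$, and since $S_n/n$ and $\min_{j\le n}S_j / n$ both converge to $E[W_0\mid\sI]$ on $\{E[W_0\mid\sI]<0\}$ (the running minimum of a walk with negative drift is asymptotic to the walk), the maximum is $o(n)$, hence $\to 0$.

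Assembling: on $\{0<Y_0<\infty\}$ we have $0\le \tfrac1n\log Y_n \le \tfrac{\log n}{n} + \tfrac1n\log^+ Y_0 + o(1) \to 0$, which is the claim; on the complement the indicator kills the expression. I expect the main obstacle to be the bookkeeping in controlling $\tfrac1n\min_{0\le j\le n-1}S_j$ and extracting it cleanly on the almost sure event $\{E[W_0\mid\sI]<0\}$ — one must use that Birkhoff convergence of $S_n/n$ to a negative limit forces both $\liminf S_j = -\infty$ and $\min_{j\le n}S_j / n \to E[W_0\mid\sI]$, which is a standard but slightly fiddly consequence (e.g. split off the finitely many initial terms before the walk becomes monotone-ish, or note $\min_{j\le n}S_j\ge \min(\min_{j\le \sqrt n}S_j, \min_{\sqrt n\le j\le n} j\cdot\tfrac{S_j}{j})$ and use uniformity of the convergence in $j$). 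Everything else is a direct computation plus one application of the ergodic theorem.
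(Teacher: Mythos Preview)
Your argument is correct in substance, though the presentation is rough: there is an off-by-one in your explicit formula (the sum should be $\sum_{j=1}^{n}\prod_{i=j}^{n-1}$ so that the $j=n$ term supplies the constant $1$), and the mid-proof sign flip on $W_i$ obscures an otherwise clean computation. With $W_i=\log V_i-\log X_i$ and $S_k=\sum_{i=0}^{k-1}W_i$, your bound becomes $0\le n^{-1}\log Y_n\le n^{-1}\log(n{+}1)+n^{-1}\log^+Y_0+n^{-1}(S_n-\min_{0\le j\le n}S_j)$, and the claim that both $S_n/n$ and $\min_{j\le n}S_j/n$ converge to the same (negative) limit $E[W_0\mid\sI]$ is the standard fact you sketch at the end.

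The paper takes a slightly different route that sidesteps the running-minimum analysis. Rather than bounding $Y_n$ directly, it introduces for each $\e>0$ a dominating sequence $F_n^\e(a)$ satisfying the same recursion but with $U_k=\log V_k-\log X_k$ replaced by the centered-and-shifted $U_k-E[U_0\mid\sI]+\e$; an induction shows $0\le\log Y_n\le F_n^\e(\log Y_0)$ on $\{0<Y_0<\infty\}$. Because the centered increments are $o(n)$ by Birkhoff, the shift by $+\e$ makes every partial sum in the explicit formula for $F_n^\e$ equal to $n\e+o(n)$, whence $F_n^\e=\log(1+e^{n\e+o(n)})$ and $n^{-1}F_n^\e\to\e$. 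Sending $\e\downarrow 0$ finishes. The $\e$-tilt converts the problem from ``negative drift, control the max of tail sums'' (your approach) to ``positive drift, the full sum dominates'' (theirs), trading the running-minimum lemma for an extra $\e\to 0$ step. Both proofs ultimately rest on a single application of the ergodic theorem; yours is more direct, theirs avoids the auxiliary fact about $\min_{j\le n}S_j/n$.
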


\begin{proof}
Let $a\in\R$ and $\e > 0$ be given and abbreviate $U_k = \log  V_k - \log  X_k$. Define
\begin{align*}
F_0^\e(a) = a\ \text{and}\ 
F^\e_{n+1}(a) =  \log\left(1 + \exp\left\{F_{n}^\e(a) +  U_n - E[U_0 \,|\, \sI]  + \e\right\}\right)
\quad\text{for $n\in\Z_+$}.
\end{align*}
An induction argument shows that for any $n \in \N$, we have 
\begin{align*}
F_n^\e(a) &= \log \Bigl(1 + \sum_{m=1}^{n-1} \exp\Bigl\{ \sum_{k=m}^{n-1}(  U_k - E[U_0 \,|\, \sI]  + \e)  \Bigr\}\\
&\qquad\qquad\qquad\qquad + \exp\Bigl\{a+\sum_{k=0}^{n-1} (U_k - E[U_0 \,|\, \sI]  + \e)  \Bigr\} \Bigr).
\end{align*}
As usual, we take an empty sum to be zero. 
The ergodic theorem implies then that $F_n^\e(a)=\log(1+e^{n\e+o(n)})$ and therefore $n^{-1}F_n^\e(a)\to\e$ $P$-almost surely, for any $a\in\R$.

Another induction  (using the fact that $E[U_0\,|\,\bar\sI]\le0$) shows that on the event $0<Y_0<\infty$, we have 
$0\le\log Y_n\le F_n^\e(\log Y_0)$ for all $n\in\N$. The claim of the lemma follows.
\end{proof}

\begin{lemma}\label{lem:Yunique}
The process $Y= \{Y_n : n \in \Z\}$ given by 
\begin{align} \label{eq:Ydef}
Y_n &= 1 + \sum_{m=-\infty}^{n-1} \prod_{k=m}^{n-1} \frac{V_k}{X_k}
\end{align}
is the unique stationary and almost surely finite solution to \eqref{eq:Y0recursion}.
For any other process $Y$ for which \eqref{eq:Y0recursion} holds $P$-almost surely  for all $n\in\Z$, $Y$ must satisfy  $P\bigl\{\lim_{j\to-\infty}\abs{Y_j} \to\infty \bigr\}> 0$ and then either $Y$ is stationary and $P(\abs{Y_{0}} = \infty) >0$ or $Y$ is not stationary.
\end{lemma}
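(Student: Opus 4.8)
The plan is to first establish that the series \eqref{eq:Ydef} converges almost surely and defines a stationary solution, then prove uniqueness among stationary a.s.\ finite solutions, and finally analyze the behavior of a general (possibly non-stationary) solution. For convergence of \eqref{eq:Ydef}, I would look at $\log$ of the generic term $\prod_{k=m}^{n-1}(V_k/X_k)$, which equals $\sum_{k=m}^{n-1}U_k$ with $U_k=\log V_k-\log X_k$. By the ergodic theorem (applied to the stationary process $\{(X_n,V_n)\}$ together with the conditioning on $\sI$), as $m\to-\infty$ the average $\frac{1}{n-m}\sum_{k=m}^{n-1}U_k\to E[U_0\,|\,\sI]=E[\log V_0\,|\,\sI]-E[\log X_0\,|\,\sI]$, which is strictly negative $P$-a.s.\ by the standing hypothesis \eqref{eq:condineq}. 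Hence the terms decay geometrically and the series converges a.s.; the resulting $Y$ is finite a.s. That $Y$ given by \eqref{eq:Ydef} satisfies the recursion \eqref{eq:Y0recursion} is the algebraic identity $1+V_nX_n^{-1}\bigl(1+\sum_{m\le n-1}\prod_{k=m}^{n-1}V_k/X_k\bigr)=1+\sum_{m\le n}\prod_{k=m}^{n}V_k/X_k$, and stationarity is immediate from the stationarity of $\{(X_n,V_n)\}$ and the shift-covariant form of the sum.

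For uniqueness, suppose $\tilde Y$ is another stationary a.s.\ finite solution of \eqref{eq:Y0recursion}. Subtracting the recursions, $D_n:=\tilde Y_n-Y_n$ satisfies $D_{n+1}=V_nX_n^{-1}D_n$, hence $D_n=D_m\prod_{k=m}^{n-1}V_k/X_k$ for $m\le n$. Running this backwards, $D_0=D_m\prod_{k=m}^{-1}V_k/X_k$; since $\prod_{k=m}^{-1}V_k/X_k\to\infty$ as $m\to-\infty$ on the a.s.\ event where the Cesàro average of $U_k$ is negative, finiteness of $D_0$ forces $D_m\to 0$ along a subsequence — but more usefully, stationarity of $\tilde Y$ and $Y$ makes $\{D_n\}$ stationary, so $|D_0|$ and $|D_m|$ have the same distribution while $|D_0|=|D_m|\cdot\prod_{k=m}^{-1}(V_k/X_k)$ with the product tending to $+\infty$; this is only consistent with $D_m=0$ a.s. I would phrase this cleanly via Lemma \ref{lem:cond0lim}: if $\tilde Y$ is any solution with $0<\tilde Y_0<\infty$ a.s., then $n^{-1}\log\tilde Y_n\to 0$, whereas running the recursion forward from a hypothetical second stationary solution and comparing growth rates yields a contradiction unless it coincides with \eqref{eq:Ydef}.

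For the last assertion, let $Y$ be an arbitrary process satisfying \eqref{eq:Y0recursion} $P$-a.s.\ for all $n\in\Z$. Write $Y_n = Y^{(0)}_n + D_n$ where $Y^{(0)}$ is the stationary solution \eqref{eq:Ydef} and $D_{n+1}=V_nX_n^{-1}D_n$, so $D_n = D_0\prod_{k=0}^{n-1}V_k/X_k$ for $n\ge 1$ and $D_n = D_0\prod_{k=n}^{-1}X_k/V_k$ for $n\le -1$. On the a.s.\ event $\{$Cesàro average of $U_k$ converges to a negative $\sI$-measurable limit$\}$, the backward product $\prod_{k=n}^{-1}X_k/V_k$ grows to $+\infty$ as $n\to-\infty$, so on $\{D_0\ne 0\}$ we get $|D_n|\to\infty$, hence $|Y_n|\to\infty$ as $n\to-\infty$ (since $Y^{(0)}_n$ stays a.s.\ finite and in fact $n^{-1}\log Y^{(0)}_n\to 0$). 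If $P(D_0\ne 0)>0$ then $P\{\lim_{j\to-\infty}|Y_j|=\infty\}>0$. It remains to rule out that $D_0\equiv 0$ while $Y\ne Y^{(0)}$, which is immediate, and to observe the dichotomy: if additionally $Y$ is stationary then $\{D_n\}$ is stationary and nonzero, and since $|D_n|=|D_0|\prod_{k=n}^{-1}X_k/V_k\to\infty$ a.s.\ on $\{D_0\ne0\}$ while being stationary, the only possibility is $P(|D_0|=\infty)>0$, i.e.\ $P(|Y_0|=\infty)>0$; otherwise $Y$ is not stationary.

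The main obstacle I anticipate is the careful handling of the conditioning on $\sI$ in the ergodic-theorem step: the hypothesis \eqref{eq:condineq} gives negativity of $E[U_0\,|\,\sI]$ only $P$-a.s.\ and not a uniform bound, so the geometric decay rate of the product $\prod_{k=m}^{n-1}V_k/X_k$ is random. One must argue on the full-probability event where the Birkhoff average of $U_k$ converges to its $\sI$-conditional expectation and is strictly negative there, and then get summability of the series \eqref{eq:Ydef} on that event — a comparison-test argument with a random but a.s.\ finite geometric bound, exactly the kind of estimate already packaged in Lemma \ref{lem:cond0lim}. Matching the stationarity-versus-blowup dichotomy in the last sentence of the statement also requires being slightly careful that ``stationary and $P(|Y_0|=\infty)>0$'' is genuinely possible (the constant-$+\infty$ process solves the recursion trivially), which is why the statement is phrased as an inclusive dichotomy rather than a strict one.
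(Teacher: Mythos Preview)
Your difference approach $D_n=\tilde Y_n-Y_n$ is valid and a bit different from the paper's, but there are two slips to fix. First, in your uniqueness paragraph you claim $\prod_{k=m}^{-1}V_k/X_k\to\infty$; since the Ces\`aro average of $U_k=\log V_k-\log X_k$ is negative by \eqref{eq:condineq}, this product in fact tends to $0$. The argument still goes through once you invert: $|D_m|=|D_0|\prod_{k=m}^{-1}X_k/V_k\to\infty$ on $\{D_0\ne0\}$, which is exactly what you write (correctly) in your third paragraph.

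Second, and more substantively, your step ``hence $|Y_n|\to\infty$ as $n\to-\infty$ (since $Y^{(0)}_n$ stays a.s.\ finite and in fact $n^{-1}\log Y^{(0)}_n\to 0$)'' is not justified: Lemma~\ref{lem:cond0lim} controls $n^{-1}\log Y_n$ only as $n\to+\infty$, and stationarity of $Y^{(0)}$ does not transfer this to $n\to-\infty$ without reversibility or a moment bound on $\log Y^{(0)}_0$ that you do not have. You can repair this by inverting the recursion directly: from $Y_m=X_mV_m^{-1}(Y_{m+1}-1)$ one gets
\[
Y_m=\Bigl(Y_0-1-\sum_{m'=m+1}^{-1}\prod_{k=m'}^{-1}\tfrac{V_k}{X_k}\Bigr)\prod_{k=m}^{-1}\tfrac{X_k}{V_k},
\]
and the bracket tends to $Y_0-Y^{(0)}_0=D_0$ while the product tends to $+\infty$, so $|Y_m|\to\infty$ on $\{D_0\ne0\}$ without any control on $Y^{(0)}_m$. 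The paper takes essentially this route, but packages it as a dichotomy on the growth rate of $|Y_j|$: it iterates \eqref{eq:Y0recursion} to obtain $Y_n=1+\sum_{m=j+1}^{n-1}\prod_{k=m}^{n-1}V_k/X_k+Y_j\prod_{k=j}^{n-1}V_k/X_k$, and then splits on whether $P\{\varliminf_{j\to-\infty}|j|^{-1}\log|Y_j|>0\}$ is zero (forcing $Y=Y^{(0)}$ along a subsequence where the remainder vanishes) or positive (forcing $|Y_j|\to\infty$). This avoids ever needing backward control of $Y^{(0)}$ and handles possibly infinite $Y_j$ uniformly. For the final stationary dichotomy the paper argues, via Birkhoff applied to $\one\{|Y_i|\ge c\}$, that $P(|Y_0|<\infty,\ |Y_j|\to\infty)=0$; your version via stationarity of $D$ reaches the same conclusion but you should spell out that step.
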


\begin{proof}
Suppose $Y$ is given by \eqref{eq:Ydef}. 
By the ergodic theorem and \eqref{eq:condineq}
\begin{align}\label{eq:exp-decay}
\lim_{m\to-\infty} \frac{1}{\abs{m}} \sum_{k=m}^{n-1}\left( \log V_k - \log X_k\right) = E\left[ \log V_0 - \log X_0 \,|\, \sI \right] < 0\quad \text{almost surely.}
\end{align}
Hence $1 < Y_n < \infty$ almost surely. 
It is also clear that $Y$ is stationary and that \eqref{eq:Ydef} implies \eqref{eq:Y0recursion}. 

Conversely, let $Y$ satisfy \eqref{eq:Y0recursion} $P$-almost surely and for all $n\in\Z$. Iterating \eqref{eq:Y0recursion} implies that whenever $j < n-1$, we must have
\begin{align}\label{eq:rec-formula}
Y_{n} &= 1 + \sum_{m=j+1}^{n-1} \prod_{k=m}^{n-1} \frac{V_k}{X_k} + Y_{j} \prod_{k=j}^{n-1} \frac{V_{k}}{X_{k}}.
\end{align}

Suppose now that
\begin{align}\label{eq:case1}
P\Bigl\{\varliminf_{j\to - \infty} \abs{j}^{-1} \log \abs{Y_j} > 0\Bigl\} =0,
\end{align}
where we  take the convention that $\log \infty = \infty$.
Then this and \eqref{eq:exp-decay} imply that almost surely $\varliminf_{j\to-\infty} \abs{Y_j}\prod_{k=j}^{n-1} \frac{V_{k}}{X_{k}} = 0$. In this case,
taking $j\to-\infty$  in \eqref{eq:rec-formula} along a subsequence that realizes this liminf  implies $Y$ is given by \eqref{eq:Ydef} almost surely and for all $n\in\Z$.

If, alternatively, the probability in \eqref{eq:case1} is positive, then with positive probability
$\abs{Y_j}\to \infty$ as $j \to -\infty$. 
If we furthermore assume that $Y$ is stationary, then the ergodic theorem implies that $\abs{Y_0}=\infty$ on the event $\{\abs{Y_j}\to\infty\}$. 
To see this last claim note that for any $c>0$, 
$|k|^{-1}\sum_{i=k}^0\one\{|Y_k|\ge c\}\to P( |Y_0|\ge c \,|\, \sI )$ almost surely.
Consequently, 
	\begin{align*}
	P\bigl\{ |Y_0|\not=\infty, |Y_k|\to\infty \bigr\} 
	&= E\bigl[ \one\{|Y_0|\not=\infty\} \one\{|Y_k|\to\infty\} \bigr]\\
         &= E\bigl[ P(|Y_0|\not=\infty \,|\, \sI ) \one\{|Y_k|\to\infty\} \bigr] = 0.\qedhere
          \end{align*}
\end{proof}

Given the setting at the beginning of the section, define $Y=\{Y_n:n\in\Z\}$ by \eqref{eq:Ydef}. By Lemma \ref{lem:Yunique}, $Y$ satisfies \eqref{eq:Y0recursion} and $1<Y_0<\infty$ almost surely.  Define the stationary process 
\begin{align}
\Xbar_n &= V_{n+1}(1+ V_n^{-1}X_nY_n^{-1}) 
=  V_{n+1} Y_{n+1} \,\frac{X_n}{V_nY_n}\in(0,\infty)\,,\quad n\in\Z.  \label{eq:X1def1}
\end{align}
An induction argument shows that for $n\in\N$, we have
\begin{align} \label{eq:product}
Y_1V_1\prod_{k=1}^n \Xbar_k &= Y_{n+1}V_{n+1}\prod_{k=1}^n X_k.
\end{align}
Lemma \ref{lem:cond0lim} implies that $\log Y_n/n\to0$ almost surely. Also, $\log \Xbar_0>\log V_1$ almost surely. Hence, the ergodic theorem implies $\log\Xbar_0$ is integrable and
	\begin{align}\label{eq:means}
	E[\log \Xbar_0\, |\, \sI] = E[\log X_0\, |\, \sI].
	\end{align} 
\begin{lemma} \label{lem:mono}
Suppose $\{(X_n^1, V_n, Y_n^1,\Xbar^1_n) : n \in \Z\}$ and $\{(X_n^2, V_n, Y_n^2,\Xbar^2_n) : n \in \Z\}$ both satisfy equations \eqref{eq:Ydef} and \eqref{eq:X1def1}. Note that both families share the same $V$ variables. Suppose also that 
$X_n^1 \leq X_n^2$ for all $n\in\Z$. Then $Y_n^1 \geq Y_n^2$ and $\Xbar_n^1\leq \Xbar_n^2$ for all $n\in\Z$.
\end{lemma}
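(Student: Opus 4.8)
The plan is to read off the $Y$-comparison directly from the closed form \eqref{eq:Ydef} and then substitute into \eqref{eq:X1def1} to obtain the $\Xbar$-comparison; no induction or fixed-point argument is needed.

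First I would compare the two $Y$-processes. By hypothesis the two families share the same $V$-variables and satisfy $0 < X_n^1 \le X_n^2$ for every $n \in \Z$, so for each $m \le n-1$ the strictly positive product $\prod_{k=m}^{n-1} V_k/X_k^j$ is nonincreasing in $j$; that is, $\prod_{k=m}^{n-1} V_k/X_k^1 \ge \prod_{k=m}^{n-1} V_k/X_k^2$. Summing these inequalities over all $m \le n-1$ in the defining formula \eqref{eq:Ydef} yields $Y_n^1 \ge Y_n^2$ for all $n \in \Z$. (Under the standing assumption \eqref{eq:condineq} of this section, Lemma \ref{lem:Yunique} guarantees $1 < Y_n^j < \infty$ almost surely for $j\in\{1,2\}$; in any case the inequality is valid in $[1,\infty]$ with the usual conventions, which is all that is used below.)

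Next I would substitute into \eqref{eq:X1def1}. Writing
\[
\Xbar_n^j = V_{n+1}\bigl(1 + V_n^{-1} X_n^j (Y_n^j)^{-1}\bigr) = V_{n+1} + V_{n+1}\,\frac{X_n^j}{V_n Y_n^j}
\qquad (j\in\{1,2\}),
\]
and using $0 < X_n^1 \le X_n^2$, $Y_n^1 \ge Y_n^2 > 0$, and the common strictly positive factors $V_n, V_{n+1}$, one gets $X_n^1/(V_n Y_n^1) \le X_n^2/(V_n Y_n^2)$ and hence $\Xbar_n^1 \le \Xbar_n^2$ for all $n \in \Z$. (If $Y_n^1 = \infty$, then $\Xbar_n^1 = V_{n+1} \le \Xbar_n^2$ trivially.)

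I do not expect a genuine obstacle. The one point that must be watched is the reversal of monotonicity in the first step: the passage $X \mapsto Y$ is \emph{order-reversing}, so in the second step one has to check that the extra factor $X_n/Y_n$ appearing in $\Xbar_n$ nevertheless depends monotonically (indeed increasingly) on the $X$-sequence — which it does, its numerator being nondecreasing and its denominator nonincreasing in $X$. The reason the argument is this short is the availability of the explicit sum \eqref{eq:Ydef}: it allows one to compare all the $Y_n$ at once without an induction needing a base case ``at $-\infty$'', which is precisely what one would have to supply when working from the recursion \eqref{eq:Y0recursion} directly.
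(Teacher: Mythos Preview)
Your proof is correct and follows essentially the same approach as the paper: compare the $Y$'s termwise from the closed form \eqref{eq:Ydef}, then feed the resulting inequality $X_n^1/Y_n^1 \le X_n^2/Y_n^2$ into \eqref{eq:X1def1}. Your additional remarks about finiteness and the order-reversal are not needed for the argument, but they do no harm.
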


\begin{proof}
It follows immediately that if $X_n^1\leq X_n^2$ for all $n\in\Z$, then
\begin{align*}
Y_n^1 &= 1 + \sum_{m=-\infty}^{n-1} \prod_{k=m}^{n-1} \frac{V_k}{X_k^1} \geq 1 + \sum_{m=-\infty}^{n-1} \prod_{k=m}^{n-1} \frac{V_k}{X_k^2} = Y_n^2. 
\end{align*}
Then one has $X_n^1/Y_n^1 \leq X_n^2/Y_n^2$ for all $n\in\Z$. It follows that
\begin{align*}
\Xbar_n^1 &= V_{n+1}\Bigl(1+ \frac{X_{n}^1}{V_{n} Y_{n}^1 }\Bigr) \leq V_{n+1}\Bigl(1+ \frac{X_{n}^2}{V_{n} Y_{n}^2 }\Bigr) = \Xbar_{n}^2.\qedhere
\end{align*}
\end{proof}

Now we define the {\sl update operator} $\Phi:\cM_1(\R^\Z)\to\cM_1(\R^\Z)$, where $\cM_1(\X)$ is the set of probability measures on $\X$. 
Let $\Omega_A=\Omega_W=\R^\Z$. Let $A^0=(A_n^0)_{n\in\Z}$ and $(A^0,W)=(A^0_n,W_n)_{n\in\Z}$ be the natural coordinate projections on $\Omega_A$ and $\Omega_A\times\Omega_W$, respectively. 
Let $(A^1,A^2)=(A^1_n,A^2_n)_{n\in\Z}$ and $(A^1,A^1,W)=(A^1_n,A^2_n,W_n)_{n\in\Z}$ be the natural coordinate projections on $\Omega_A\times\Omega_A$ and $\Omega_A\times\Omega_A\times\Omega_W$, respectively. Equip all these spaces with the product topologies, Borel $\sigma$-algebras, and natural shifts. Let $\sI^0$ and $\sI$ be the $\sigma$-algebras of shift-invariant Borel subsets of $\Omega_A$ and $\Omega_A\times\Omega_W$, respectively.
Note that if we view $\Omega_A$ as embedded in $\Omega_A\times\Omega_W$ and abuse notation by identifying $\sI$ and $\sI\times\Omega_W$, then $\sI \subset\sI^0$. 

For $i\in\{0,1,2\}$, given numbers $(A^i_n)_{n\in\Z}$ and $(W_n)_{n\in\Z}$
define $X^i_n=e^{A^i_n}$ and $V_n=e^{W_n}$ then
$Y^i_n$ and $\Xbar^i_n$ via \eqref{eq:Ydef} and \eqref{eq:X1def1}, $n\in\Z$. 
Let $\Abar^i_n=\log\Xbar^i_n$.

To define $\Phi$ we need a probability measure $\Gamma$ on $\R$ such that $\int \abs{s}\,\Gamma(ds)<\infty$. Let $M_0=\int s\,\Gamma(ds)$. Given such a $\Gamma$, the mapping $\Phi=\Phi_\Gamma$ 
sends $\mu\in\cM_1(\Omega_A)$ to the distribution $\Phi(\mu)\in\cM_1([-\infty,\infty]^\Z)$ of $(\Abar^0_n)_{n\in\Z}$ induced by $P=\mu\otimes\Gamma^{\otimes\Z}\in\cM_1(\Omega_A\times\Omega_W)$.  

We are interested in shift-invariant fixed points of $\Phi$. To have $\Phi(\mu)\in\cM_1(\Omega_A)$ we need to have $\abs{\Abar^0_n}<\infty$,
$P$-almost surely. This is guaranteed if $\mu$ is satisfies 
 $E^\mu[\abs{A_0^0}]<\infty$
and 
$\mu$-almost surely $E^\mu[A^0_0\,|\,\sI^0]>E[W_0]=M_0$. 
Indeed, as observed above, $\sI\subset\sI^0$ and hence the stochastic process
$(X^0,V)=\{(X^0_n,V_n):n\in\Z\}$ falls in the setting at the beginning of the section. 
Then by \eqref{eq:X1def1} we have $\abs{\Abar^0_n}<\infty$ almost surely.

Given two stationary probability measures $\mu^1,\mu^2$ on $\Omega_A$, let $\sM(\mu^1,\mu^2)$ denote all stationary probability measures on $\Omega_A\times\Omega_A$, with marginals $\mu^1$ and $\mu^2$. 
Recall the definition of the $\bar{\rho}$ distance:
\begin{align}\label{def:rho}
\bar{\rho}(\mu^1,\mu^2) &= \inf_{\lambda\in\sM(\mu^1,\mu^2)} E^\lambda\bigl[|A_0^1 - A_0^2|\bigr].
\end{align}
When $\mu^1$ and $\mu^2$ are ergodic, the infimum may be taken over ergodic measures $\lambda$, by the ergodic decomposition theorem. 
It is shown in \cite[Theorem 8.3.1]{Gra-09} that the $\bar\rho$ distance is a metric and the infimum is 
achieved. 
The following is a positive-temperature analogue of an argument originally due to Chang \cite{Cha-94}. Note that the technical assumption $P(V_0 >c) >0$ for all $c$, required in \cite{Cha-94}, 
is  not needed in positive temperature.

\begin{proposition} \label{prop:contraction}
Let $\mu^1$ and $\mu^2$ be two ergodic probability measures on $\Omega_A$. Assume  $E^{\mu^i}[\abs{A_0^0}]<\infty$ and
$E^{\mu^{i}}[A_0^0]> M_0$, $i\in\{1,2\}$. Then $\bar{\rho}(\Phi(\mu^1),\Phi(\mu^2)) \leq \bar{\rho}(\mu^1,\mu^2)$.
If in addition $\mu^1 \neq \mu^2$ but $E^{\mu_1}[A_0^0]= E^{\mu_2}[A_0^0]$, then
$\bar{\rho}(\Phi(\mu^1),\Phi(\mu^2)) < \bar{\rho}(\mu^1,\mu^1)$.
\end{proposition}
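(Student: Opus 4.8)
The plan is to prove the contraction property by coupling the two chains through their common $V$ variables and then using a monotonicity/interchange argument in the spirit of Chang's coupling. First I would take an optimal stationary coupling $\lambda\in\sM(\mu^1,\mu^2)$ achieving $\bar\rho(\mu^1,\mu^2)=E^\lambda[|A^1_0-A^2_0|]$ (which exists by \cite[Theorem 8.3.1]{Gra-09}), and build on $\Omega_A\times\Omega_A\times\Omega_W$ the joint process $(X^1_n,X^2_n,V_n)$, from which $Y^i_n$ and $\Xbar^i_n$ are determined by \eqref{eq:Ydef} and \eqref{eq:X1def1}. This gives a stationary coupling of $\Phi(\mu^1)$ and $\Phi(\mu^2)$, so $\bar\rho(\Phi(\mu^1),\Phi(\mu^2))\le E[\,|\Abar^1_0-\Abar^2_0|\,]$, and the whole game is to bound the right-hand side by $E[\,|A^1_0-A^2_0|\,]$.

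The key computation is a pointwise comparison of $|\Abar^1_n-\Abar^2_n|$ with $|A^1_n-A^2_n|$ after integration. The monotonicity Lemma \ref{lem:mono} shows that on the event $\{X^1_n\le X^2_n\ \forall n\}$ one has $Y^1_n\ge Y^2_n$ and $\Xbar^1_n\le\Xbar^2_n$; but of course the two configurations need not be ordered globally. The standard trick is to introduce the pointwise maximum and minimum configurations $X^{\max}_n=X^1_n\vee X^2_n$ and $X^{\min}_n=X^1_n\wedge X^2_n$ (equivalently $A^{\max}_n=A^1_n\vee A^2_n$, $A^{\min}_n=A^1_n\wedge A^2_n$), run the recursions \eqref{eq:Ydef}, \eqref{eq:X1def1} from each of these to get $\Xbar^{\max}$ and $\Xbar^{\min}$, and observe via Lemma \ref{lem:mono} that $\Xbar^{\min}_n\le\Xbar^i_n\le\Xbar^{\max}_n$ for $i=1,2$, hence $|\Abar^1_n-\Abar^2_n|\le \Abar^{\max}_n-\Abar^{\min}_n$. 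Summing (or averaging) the identity $A^{\max}_n+A^{\min}_n=A^1_n+A^2_n$ and likewise for the $\Abar$'s — here I use the mean-preservation \eqref{eq:means}, which gives $E[\Abar^{\max}_0]+E[\Abar^{\min}_0]=E[A^{\max}_0]+E[A^{\min}_0]$ provided the relevant conditional-mean hypothesis \eqref{eq:condineq} holds for both the max and min configurations (it does, since $E[A^{\min}_0\mid\sI]\ge \min_i E[A^i_0\mid\sI]>M_0$ by ergodicity of the $\mu^i$, and similarly for the max). Therefore
\begin{align*}
E[\,|\Abar^1_0-\Abar^2_0|\,]&\le E[\Abar^{\max}_0-\Abar^{\min}_0]
= E[A^{\max}_0-A^{\min}_0]=E[\,|A^1_0-A^2_0|\,]=\bar\rho(\mu^1,\mu^2),
\end{align*}
which is the first assertion.

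For the strict inequality under the extra hypotheses $\mu^1\ne\mu^2$ and $E^{\mu^1}[A^0_0]=E^{\mu^2}[A^0_0]$, the point is that equality in $|\Abar^1_n-\Abar^2_n|\le\Abar^{\max}_n-\Abar^{\min}_n$ forces, for a.e.\ $n$, that $\{\Xbar^1_n,\Xbar^2_n\}=\{\Xbar^{\max}_n,\Xbar^{\min}_n\}$, i.e.\ the output configurations are pointwise ordered. I would then trace the strict monotonicity in the recursion \eqref{eq:Ydef}, \eqref{eq:X1def1}: if $X^{\min}_n<X^{\max}_n$ on a positive-density set of $n$ (which must happen since $\mu^1\ne\mu^2$), then the summation formula for $Y$ makes $Y^{\min}_n>Y^{\max}_n$ strictly, and feeding this into \eqref{eq:X1def1} propagates a strict gap $\Xbar^{\min}_n<\Xbar^{\max}_n$ to \emph{all} $n$; combined with the ordering just deduced, this says $(\Xbar^1,\Xbar^2)$ is a genuinely ordered pair of configurations with $\Xbar^1\ne\Xbar^2$ a.s. But then $\Phi(\mu^1)$ and $\Phi(\mu^2)$ are ordered measures with equal means (by \eqref{eq:means} and the equal-mean hypothesis), and two distinct stationary measures on $\R^\Z$ that are stochastically ordered cannot have the same mean — contradiction — unless the gap $E[\Abar^{\max}_0-\Abar^{\min}_0]-E[\,|\Abar^1_0-\Abar^2_0|\,]$ was already strictly positive. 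I expect the delicate point, and the main obstacle, to be this last step: making rigorous the dichotomy ``either strict contraction, or the outputs are ordered with equal mean hence equal,'' and in particular handling the measurability/positive-density bookkeeping for the set of coordinates where the max and min configurations differ, together with verifying the conditional-mean hypothesis \eqref{eq:condineq} for the auxiliary max/min processes so that \eqref{eq:means} legitimately applies to them.
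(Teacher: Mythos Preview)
Your approach for the non-strict inequality introduces both the pointwise maximum $X^{\max}$ and the pointwise minimum $X^{\min}$, but the minimum is the problem. Your claimed inequality $E[A^{\min}_0\mid\sI]\ge \min_i E[A^i_0\mid\sI]$ goes the wrong way: since $A^{\min}_0\le A^i_0$ for each $i$, one has $E[A^{\min}_0\mid\sI]\le\min_i E[A^i_0\mid\sI]$, and nothing prevents $E[A^{\min}_0]$ from dropping to or below $M_0$. When that happens the series \eqref{eq:Ydef} for $Y^{\min}$ diverges, $\Xbar^{\min}$ is not well-defined, and \eqref{eq:means} is unavailable for the minimum process. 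The paper sidesteps this entirely by introducing \emph{only} the maximum $A^3_n=A^1_n\vee A^2_n$ (for which $E[A^3_0]\ge E[A^1_0]>M_0$ is automatic), using the algebraic identity $|a-b|=2(a\vee b)-a-b$ to write $|A^1_0-A^2_0|=2A^3_0-A^1_0-A^2_0$ and $|\Abar^1_0-\Abar^2_0|\le 2\Abar^3_0-\Abar^1_0-\Abar^2_0$ (from Lemma~\ref{lem:mono}), and then applying \eqref{eq:means} to each of $X^1,X^2,X^3$. Your argument is easily repaired by dropping the minimum and following this route.

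Your strict-inequality argument also has a gap. From $|\Abar^1_n-\Abar^2_n|=\Abar^{\max}_n-\Abar^{\min}_n$ a.s.\ you only get $\{\Abar^1_n,\Abar^2_n\}=\{\Abar^{\min}_n,\Abar^{\max}_n\}$ \emph{coordinatewise}; which of $\Abar^1,\Abar^2$ is the larger one may vary with $n$, so you have not shown that one output process dominates the other globally, and the ``ordered with equal mean hence equal'' dichotomy does not kick in. The paper's route is more direct. Since $\mu^1\ne\mu^2$ have equal means, under an ergodic optimal coupling there is with positive probability a finite crossing window: indices $1<n$ with $A^1_1<A^2_1$, $A^1_m\le A^2_m$ for $1\le m<n$, and $A^1_n>A^2_n$. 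On this event $\bigl(\prod_{k=m}^n X^1_k\bigr)\vee\bigl(\prod_{k=m}^n X^2_k\bigr)<\prod_{k=m}^n X^3_k$, which via \eqref{eq:Ydef} forces $Y^3_{n+1}<Y^1_{n+1}\wedge Y^2_{n+1}$, and then \eqref{eq:X1def1} gives $\Xbar^1_{n+1}\vee\Xbar^2_{n+1}<\Xbar^3_{n+1}$ on a set of positive probability. This makes the inequality $|\Abar^1_0-\Abar^2_0|\le 2\Abar^3_0-\Abar^1_0-\Abar^2_0$ strict in expectation, and the strict contraction follows.
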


\begin{proof}
Fix an ergodic $\lambda \in \sM(\mu^1,\mu^2)$ and let $P=\lambda\otimes\Gamma^{\otimes\Z}\in\cM_1(\Omega_A\times\Omega_A\times\Omega_W)$. Being a product of an ergodic measure and a product measure, $P$ is also ergodic.
Let $A_n^3 = A_n^1 \vee A_n^2$. Then 
	\[E[A_0^3]=E^\lambda[ A_0^3 ] \ge E^\lambda [A_0^1]=E^{\mu^1}[A_0^0]> M_0.\] 
Thus, the setting at the beginning of the section applies to 
$X^3=\{X_{n}^3= e^{A_{n}^3}:n\in\Z\}$ and $V$. Construct $\Abar^3=\{\Abar_n^3:n\in\Z\}$ as was done above for $\Abar^i$, $i\in\{0,1,2\}$.
Lemma \ref{lem:mono} implies that $P$-almost surely
$\Abar_0^3 \geq \Abar_0^1 \vee \Abar_0^2$. Hence $P$-almost surely and for all $n\in\Z$
\begin{align}\label{coupling}
\begin{split}
&| A_0^1 -  A_0^2| = 2 A_0^1 \vee  A_0^2 -  A_0^1 -  A_0^2 = 2  A_0^3 - A_0^1 -  A_0^2\quad\text{and}\\
&| \Abar_0^1 - \Abar_0^2| = 2 \Abar_0^1 \vee  \Abar_0^2 -  \Abar_0^1 -  \Abar_0^2 \leq 2  \Abar_0^3 - \Abar_0^1 -  \Abar_0^2.
\end{split}
\end{align}
 
By \eqref{eq:means} 
we have $E[\Abar_0^i]=E[\log\Xbar_0^i] = E[\log X_0^i]=E[A_0^i]=E^\lambda[A_0^i]$ for $i\in\{1,2,3\}$.
This and \eqref{coupling}  give
\begin{align*}
E[| \Abar_0^1 - \Abar_0^2|] \le E[2  \Abar_0^3 - \Abar_0^1 -  \Abar_0^2] 
=E^\lambda[2  A_0^3 - A_0^1 -  A_0^2] = E^\lambda[|A_0^1 - A_0^2|].
\end{align*}
The left hand side is greater than $\bar{\rho}(\Phi(\mu^1), \Phi(\mu^2))$. The first claim now follows by taking the infimum over $\lambda$ on the right hand side.

Turning to the second claim, suppose that $E^{\mu_1}[A_{0}^0] = E^{\mu_2}[A_{0}^0]$ and  that $\mu^1 \neq \mu^2$. Let $\lambda\in\cM(\mu^1,\mu^2)$ be an ergodic minimizer of \eqref{def:rho}. Then 
there exists an integer $n > 1$ with
\begin{align*}
\lambda\bigl\{A_{1}^1 < A_{1}^2, A_{m}^1 \leq A_{m}^2, 1 \leq m < n, A_{n}^1 > A_{n}^2\bigr\} > 0.
\end{align*}
Under the event in the above probability, we have
\begin{align*}
\Bigl(\sum_{k=m}^n A_{k}^1\Bigr) \vee \Bigl(\sum_{k=m}^n A_{k}^2\Bigr) < \sum_{k=m}^n A_{k}^3
\end{align*}
whenever $1 \leq m < n$. This is equivalent to
\begin{align} \label{eq:Xstrict0}
\Bigl(\prod_{k=m}^n X_{k}^1 \Bigr)\vee \Bigl(\prod_{k=m}^n X_{k}^2\Bigr) < \prod_{k=m}^n X_{k}^3.
\end{align}
We also have $X_k^1 \vee X_k^2 \leq X_k^3$ for all $k\in\Z$.
Then the representation \eqref{eq:Ydef} of $Y^i$, $i\in\{1,2,3\}$, 
gives $Y_{n+1}^3 < Y_{n+1}^1 \wedge Y_{n+1}^2$ and  
from \eqref{eq:X1def1}, it follows that for $i \in \{1,2\}$
\begin{align*}
\Xbar_{n+1}^i = V_{n+2}\Bigl(1 + \frac{X_{n+1}^i}{V_{n+1}Y_{n+1}^i}\Bigr) <  V_{n+2}\Bigl(1 + \frac{X_{n+1}^3}{V_{n+1}Y_{n+1}^3}\Bigr) = \Xbar_{n+1}^3.
\end{align*}
Thus, $\lambda\bigl(\Abar_{n+1}^1 \vee \Abar_{n+1}^2 < \Abar_{n+1}^3 \bigr) > 0.$ In particular,
\begin{align*}
\bar{\rho}(\Phi(\mu^1),\Phi(\mu^2)) &\leq E^\lambda[|\Abar_0^1 - \Abar_0^2|] = E^\lambda[| \Abar_{n+1}^1 - \Abar_{n+1}^2|] \\
&= E^\lambda[ 2\Abar_{n+1}^1 \vee  \Abar_{n+1}^2 -  \Abar_{n+1}^1 - \Abar_{n+1}^2] \\
&< E^\lambda[2 \Abar_{n+1}^3 - \Abar_{n+1}^1 -  \Abar_{n+1}^2] = E^\lambda[|A_0^1 - A_0^2|] = \bar{\rho}(\mu^1,\mu^2). \qedhere
\end{align*}
\end{proof}

Let $\sM_e^{\alpha}(\Omega_A)$ be the set of ergodic probability measures $\mu\in\cM_1(\Omega_A)$ with marginal mean $E^\mu[A^0_0]=\alpha$. The following is an immediate consequence of the previous proposition.

\begin{corollary}\label{cor:unique} For each $\alpha > M_0$ there exists at most one $\mu \in \sM_e^\alpha(\Omega_A)$ with $\Phi(\mu) = \mu$.
\end{corollary}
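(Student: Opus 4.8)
The plan is to read this off directly from the strict-contraction clause of Proposition \ref{prop:contraction}. Suppose, toward a contradiction, that $\mu^1,\mu^2\in\sM_e^\alpha(\Omega_A)$ are two \emph{distinct} probability measures with $\Phi(\mu^1)=\mu^1$ and $\Phi(\mu^2)=\mu^2$. By definition of $\sM_e^\alpha(\Omega_A)$, both are ergodic, satisfy $E^{\mu^i}[\abs{A_0^0}]<\infty$, and have $E^{\mu^i}[A_0^0]=\alpha>M_0$. Thus the hypotheses of Proposition \ref{prop:contraction} are met, and moreover $E^{\mu^1}[A_0^0]=E^{\mu^2}[A_0^0]$ while $\mu^1\neq\mu^2$, so the second conclusion of that proposition applies and gives $\bar\rho(\Phi(\mu^1),\Phi(\mu^2))<\bar\rho(\mu^1,\mu^2)$.

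Before invoking this I would record that $\bar\rho(\mu^1,\mu^2)<\infty$, so that the strict inequality is not vacuous: since $\mu^1,\mu^2$ are shift-invariant, the product measure $\mu^1\otimes\mu^2$ is a shift-invariant element of $\sM(\mu^1,\mu^2)$, whence $\bar\rho(\mu^1,\mu^2)\le E^{\mu^1}[\abs{A_0^0}]+E^{\mu^2}[\abs{A_0^0}]<\infty$. Substituting $\Phi(\mu^i)=\mu^i$ into the displayed inequality then yields $\bar\rho(\mu^1,\mu^2)<\bar\rho(\mu^1,\mu^2)$, which is absurd. Hence no two distinct fixed points of $\Phi$ can lie in $\sM_e^\alpha(\Omega_A)$, which is the assertion of Corollary \ref{cor:unique}.

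There is essentially no obstacle here beyond bookkeeping. The only two points worth a sentence are the finiteness of $\bar\rho(\mu^1,\mu^2)$ (handled by the product coupling above) and the internal consistency of the fixed-point equation, namely that for $\mu\in\sM_e^\alpha(\Omega_A)$ one indeed has $\Phi(\mu)\in\cM_1(\Omega_A)$ rather than merely $\cM_1([-\infty,\infty]^\Z)$; this is exactly the condition noted just before Proposition \ref{prop:contraction}, that $E^\mu[\abs{A_0^0}]<\infty$ and $E^\mu[A_0^0\,|\,\sI^0]>M_0$ almost surely, and it holds here because $\mu$ is ergodic with mean $\alpha>M_0$, so $E^\mu[A_0^0\,|\,\sI^0]=\alpha>M_0$ $\mu$-almost surely.
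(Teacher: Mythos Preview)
Your argument is correct and matches the paper's approach: the paper simply states that the corollary is ``an immediate consequence of the previous proposition'' (Proposition~\ref{prop:contraction}), and your contradiction via the strict-contraction clause is exactly that immediate consequence spelled out. The additional remarks you supply---finiteness of $\bar\rho(\mu^1,\mu^2)$ via the product coupling and well-definedness of $\Phi(\mu)$ on $\sM_e^\alpha(\Omega_A)$---are sound and fill in details the paper leaves implicit.
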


\begin{lemma}\label{lem:ergofixed}
Suppose $\mu\in\sM_1(\Omega_A)$ is stationary, $\Phi(\mu) = \mu$, and for some constant $ \alpha > M_0$ we have
\begin{align}\label{erg-lim}
\lim_{n \to \infty} \frac{1}{n} \sum_{m=0}^{n-1} A_{m}^0 = \alpha,\quad \mu\text{-almost surely.}
\end{align}
Then $\mu \in \sM_e^{\alpha}(\Omega_A)$.
\end{lemma}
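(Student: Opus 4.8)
The plan is to establish two things separately: that $\mu$ has marginal mean $\alpha$, and that $\mu$ is ergodic under the shift on $\Omega_A$. The second is the substantive point, and I would derive it from an ergodic‑decomposition argument powered by the strict contraction in Proposition~\ref{prop:contraction}. Throughout write $a^+=\max(a,0)$, $a^-=\max(-a,0)$, and $P=\mu\otimes\Gamma^{\otimes\Z}$.

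For the mean: since $\Phi(\mu)=\mu$, the $\mu$-law of $A_0^0$ equals the $P$-law of $\Abar_0^0=\log\Xbar_0^0$. From \eqref{eq:X1def1} and $Y_0^0\ge 1$ one has $\Xbar_0^0\ge V_1=e^{W_1}$, so $(\Abar_0^0)^-\le (W_1)^-$, and since $\int|s|\,\Gamma(ds)<\infty$ this gives $(A_0^0)^-\in L^1(\mu)$. Birkhoff's theorem then yields $n^{-1}\sum_{m=0}^{n-1}(A_m^0)^-\to E^\mu[(A_0^0)^-\,|\,\sI^0]$ $\mu$-a.s.; using $(A_m^0)^+=A_m^0+(A_m^0)^-$ and adding this to \eqref{erg-lim} gives $n^{-1}\sum_{m=0}^{n-1}(A_m^0)^+\to\alpha+E^\mu[(A_0^0)^-\,|\,\sI^0]$ $\mu$-a.s. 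Comparing with the nonnegative ergodic theorem identifies $E^\mu[(A_0^0)^+\,|\,\sI^0]=\alpha+E^\mu[(A_0^0)^-\,|\,\sI^0]$ a.s., and taking expectations gives $E^\mu[(A_0^0)^+]=\alpha+E^\mu[(A_0^0)^-]<\infty$. So $A_0^0\in L^1(\mu)$, hence $E^\mu[A_0^0\,|\,\sI^0]=\alpha$ a.s.\ by Birkhoff and \eqref{erg-lim}, and in particular $E^\mu[A_0^0]=\alpha>M_0$.

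For ergodicity, I would take the ergodic decomposition $\mu=\int\nu\,\pi(d\nu)$ of $\mu$ over shift‑ergodic probability measures on $\Omega_A$. For $\pi$-a.e.\ $\nu$ the $\mu$-conull event in \eqref{erg-lim} is $\nu$-conull and $A_0^0\in L^1(\nu)$, so Birkhoff gives $E^\nu[A_0^0]=\alpha$, i.e.\ $\nu\in\sM_e^\alpha(\Omega_A)$. The key observation is that $\Phi$ is \emph{affine}: it is the composition of the affine map $\rho\mapsto\rho\otimes\Gamma^{\otimes\Z}$ with push‑forward along the fixed, shift‑commuting measurable map $G$ that sends $(A^0,W)$ to $(\Abar_n^0)_n$ via \eqref{eq:Ydef}--\eqref{eq:X1def1}; hence $\mu=\Phi(\mu)=\int\Phi(\nu)\,\pi(d\nu)$. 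Moreover $\Phi(\nu)\in\sM_e^\alpha(\Omega_A)$ for $\pi$-a.e.\ $\nu$: the product $\nu\otimes\Gamma^{\otimes\Z}$ is ergodic (a product of an ergodic system with the mixing system $\Gamma^{\otimes\Z}$), so its push‑forward $\Phi(\nu)$ under the shift‑commuting $G$ is ergodic; and because $E^\nu[A_0^0]=\alpha>M_0$ the setting of Section~\ref{sec:aux} applies to $(X^0,V)$ under $\nu\otimes\Gamma^{\otimes\Z}$, so \eqref{eq:means} shows $\Phi(\nu)$ again has mean $\alpha$. Therefore $\int\nu\,\pi(d\nu)=\mu=\int\Phi(\nu)\,\pi(d\nu)$ are two ergodic decompositions of $\mu$, and uniqueness of the ergodic decomposition forces $\Phi_*\pi=\pi$ (the attendant measurability of $\Phi$ and of $\bar\rho$ on pairs of measures is routine).

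To finish, sample $\nu^1,\nu^2$ independently from $\pi$; then $(\Phi\nu^1,\Phi\nu^2)$ also has law $\pi\otimes\pi$, so $E_{\pi\otimes\pi}\bigl[\bar\rho(\Phi\nu^1,\Phi\nu^2)\bigr]=E_{\pi\otimes\pi}\bigl[\bar\rho(\nu^1,\nu^2)\bigr]$, both finite because $\bar\rho(\nu^1,\nu^2)\le E^{\nu^1}[|A_0^0|]+E^{\nu^2}[|A_0^0|]$ has $\pi\otimes\pi$-integral $2E^\mu[|A_0^0|]<\infty$. If $\pi$ were not a Dirac mass, then $\pi\otimes\pi(\nu^1\neq\nu^2)>0$, and on that event Proposition~\ref{prop:contraction} (both $\nu^i$ ergodic with common mean $\alpha>M_0$) gives the strict inequality $\bar\rho(\Phi\nu^1,\Phi\nu^2)<\bar\rho(\nu^1,\nu^2)$, so the two integrals cannot be equal --- a contradiction. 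Hence $\pi$ is a Dirac mass, necessarily at $\mu$, so $\mu$ is ergodic, and with the mean computation $\mu\in\sM_e^\alpha(\Omega_A)$. The step I expect to be the real obstacle is verifying that $\Phi$ interacts correctly with the ergodic decomposition --- affineness together with the fact that it sends a $\pi$-a.e.\ ergodic mean‑$\alpha$ measure to an ergodic mean‑$\alpha$ measure --- since this rests on the shift‑commutativity of $G$, on ergodicity being preserved under products with a mixing system, and on the mean identity \eqref{eq:means} (hence on knowing $E^\nu[A_0^0]>M_0$); once that is in place, ``$\Phi_*\pi=\pi$ plus strict contraction'' collapses $\pi$ to a point in a single line.
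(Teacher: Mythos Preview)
Your proof is correct and follows essentially the same blueprint as the paper: ergodic decomposition of $\mu$, affineness of $\Phi$, and uniqueness of the ergodic decomposition to get $\Phi_*\pi=\pi$ (the paper writes this as $Q_\mu\circ\Phi^{-1}=Q_\mu$), followed by an appeal to the contraction in Proposition~\ref{prop:contraction}. You are also more careful than the paper about establishing $A_0^0\in L^1(\mu)$ from the hypotheses alone.

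The one genuine difference is the endgame. The paper compares $\nu$ with its own orbit: from $\Phi_*Q_\mu=Q_\mu$ it gets $\int\bar\rho(\nu,\Phi\nu)\,dQ_\mu=\int\bar\rho(\Phi^k\nu,\Phi^{k+1}\nu)\,dQ_\mu$ for every $k$, and since $\bar\rho(\Phi^k\nu,\Phi^{k+1}\nu)$ is nonincreasing in $k$, equality of integrals forces it constant $Q_\mu$-a.s.; strict contraction then gives $\nu=\Phi\nu$ a.s., and Corollary~\ref{cor:unique} collapses $Q_\mu$ to a point. You instead pair two independent samples $\nu^1,\nu^2\sim\pi$, note that $(\Phi\nu^1,\Phi\nu^2)$ again has law $\pi\otimes\pi$, and compare $E_{\pi\otimes\pi}[\bar\rho(\nu^1,\nu^2)]$ with $E_{\pi\otimes\pi}[\bar\rho(\Phi\nu^1,\Phi\nu^2)]$; strict contraction on $\{\nu^1\neq\nu^2\}$ immediately forces $\pi$ to be a Dirac mass. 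Your route is a touch more direct (it bypasses the explicit statement of Corollary~\ref{cor:unique}), while the paper's route isolates the intermediate fact that $Q_\mu$-a.e.\ component is itself a fixed point of $\Phi$.
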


\begin{proof}
By the ergodic decomposition theorem, there exists $Q_\mu \in \cM_1\bigl(\sM_e^{\alpha}(\Omega_A)\bigr)$ with
\begin{align*}
\mu = \int_{\sM_e^{\alpha}(\Omega_A)}\nu\, Q_\mu(d\nu).
\end{align*}
Let $\phi$ be a bounded measurable function on $\Omega_A$. Then
\begin{align*}
E^{\Phi(\mu)}[\phi(A^0)] = E^{\mu}E^{\Gamma^{\otimes\Z}}[\phi(\Abar^0)] 
&= \int_{\sM_e^\alpha(\Omega_A)} E^{\nu}E^{\Gamma^{\otimes\Z}}[\phi(\Abar^0)]\, Q_\mu(d\nu)\\
&= \int_{\sM_e^\alpha(\Omega_A)} E^{\Phi(\nu)}[\phi(A^0)] \,Q_\mu(d\nu).
\end{align*} 
This is equivalent to $\Phi(\mu) = \int \Phi (\nu)\, Q_\mu(d\nu)$. Since $\Phi(\mu) = \mu$, uniqueness in the ergodic decomposition theorem implies that $Q_\mu \circ \Phi^{-1} = Q_\mu$. 
In particular, $\Phi(\nu)\in\sM_e^\alpha(\Omega_A)$ for $Q_\mu$-almost every $\nu$.
Also, for any $k \in \N$
\begin{align*}
\int_{\sM_e^{\alpha}(\Omega_A)} \bar{\rho}(\nu,\Phi(\nu))\,Q_\mu(d\nu) = \int_{\sM_e^{\alpha}(\Omega_A)} \bar{\rho}(\Phi^k( \nu),\Phi^{k+1}(\nu))\,Q_\mu(d\nu).
\end{align*}
The inequality 
in Proposition \ref{prop:contraction} then implies that
\begin{align*}
Q_\mu\left(\left\{\nu \in \sM_e^{\alpha}(\Omega_A) : \bar{\rho}(\nu,\Phi(\nu)) = \bar{\rho}(\Phi^k(\nu), \Phi^{k+1}(\nu)) \text{ } \forall k \in \N \right\} \right) = 1.
\end{align*}
By the second part of Proposition \ref{prop:contraction} it must be the case that $\nu = \Phi(\nu)$ for $Q_\mu$-almost every $\nu$. Corollary \ref{cor:unique} then implies that $Q_\mu$ is a Dirac mass and so $\mu \in \sM_e^{\alpha}(\Omega_A)$.
\end{proof}

We close this section with a proof of the stationarity mentioned at the end of Section \ref{sec:boundary}.

\begin{lemma}\label{lm:lineBurke}
Fix $\rho>\theta>0$. Assume $\{V_n:n\in\Z\}$ are i.i.d.\ such that $1/V_0$ is gamma-distributed with scale parameter $1$ and shape parameter $\rho$. Assume $X_n$ are i.i.d.\ such that $1/X_0$ is gamma-distributed with scale parameter $1$ and shape parameter $\theta$. Assume the two families of random variables are independent. 
Then $\{\Xbar_n:n\in\Z\}$, defined by \eqref{eq:X1def1}, has the same distribution as 
$\{X_n:n\in\Z\}$.
\end{lemma}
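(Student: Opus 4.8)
The plan is to prove that the stationary process $\{\Xbar_n:n\in\Z\}$ has the same law as $\{X_n\}$, i.e.\ is i.i.d.\ with $1/\Xbar_0\sim\mathrm{Gamma}(1,\theta)$. As both processes are stationary it is enough to fix $j\ge1$ and $n_1<\dots<n_j$ and show that $(\Xbar_{n_1},\dots,\Xbar_{n_j})$ is a vector of $j$ i.i.d.\ copies of $X_0$. The route I would take is to approximate by Sepp\"al\"ainen's stationary log-gamma polymer with an L-shaped boundary whose corner is pushed to $-\infty$ along the $e_1$-axis, where the Burke property is directly available.

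Fix $M\in\Z$ with $M<n_1$ and consider the log-gamma polymer on the quadrant with corner at $Me_1$: a vertical boundary leg carrying i.i.d.\ $\overline\bbW_{\rho-\theta}$-increments, a horizontal boundary leg carrying the $\bbW_\theta$-increments $X_n$ for $n\ge M$, and bulk weights i.i.d.\ $\bbW_\rho$, so that $V_n=e^{\w_{(n,1)}}$ along the first bulk row ($n\ge M+1$). This is a translate of the corner-at-origin model of Section \ref{sec:boundary}, and is therefore stationary in the sense of that section by translation covariance and the Burke property \cite[Theorem 3.3]{Sep-12-corr}. Applying the $e_2$-shift contained in that stationarity, and using that the zeroth-row horizontal increments $B_{\bfy}((n,0),(n+1,0))$, $n\ge M$, are i.i.d.\ $\bbW_\theta$, we get that the first-row increments $B_{\bfy}((n,1),(n+1,1))$, $n\ge M+1$, are i.i.d.\ $\bbW_\theta$ as well; equivalently, $\{e^{B_{\bfy}((n,1),(n+1,1))}:n\ge M+1\}$ are i.i.d.\ copies of $X_0$. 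On the other hand, because $B_{\bfy}$ is a corrector, Lemma \ref{lm:Lindley} applied along the first two rows gives $e^{B_{\bfy}((n,1),(n+1,1))}=V_{n+1}\bigl(1+X_n(V_n Y^M_n)^{-1}\bigr)$ for $n\ge M+1$, where $Y^M_n$ solves the recursion \eqref{eq:Y0recursion} for $n\ge M+1$ from the initial value $Y^M_{M+1}=1+e^{B_{\bfy}((M,0),(M,1))}/X_M$ read off the vertical boundary; equivalently, $Y^M_n=1+\sum_{m=M+1}^{n-1}\prod_{k=m}^{n-1}\frac{V_k}{X_k}+(Y^M_{M+1}-1)\prod_{k=M+1}^{n-1}\frac{V_k}{X_k}$.

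I would realize all of these $M$-models on a single probability space carrying the i.i.d.\ sequences $\{(V_n,X_n)\}$ together with an independent i.i.d.\ family of $\overline\bbW_{\rho-\theta}$-weights (one per value of $M$), and then let $M\to-\infty$. The standing hypothesis \eqref{eq:condineq} holds here since $E[\log X_0]>E[\log V_0]$ (because $1/X_0$ and $1/V_0$ are $\mathrm{Gamma}(1,\theta)$ and $\mathrm{Gamma}(1,\rho)$ with $\theta<\rho$), so by \eqref{eq:exp-decay} the product $\prod_{k=M+1}^{n-1}V_k/X_k$ decays exponentially in $|M|$; since $\log(Y^M_{M+1}-1)$ has moments of all orders it is $o(|M|)$ almost surely, the boundary term drops out, and $Y^M_n\to Y_n$ almost surely for each fixed $n$, with $Y_n$ as in \eqref{eq:Ydef}. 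Hence $e^{B_{\bfy}((n,1),(n+1,1))}\to\Xbar_n$ almost surely for each fixed $n$. For $M<n_1$ the vector $\bigl(e^{B_{\bfy}((n_i,1),(n_i+1,1))}\bigr)_{i=1}^j$ consists of $j$ i.i.d.\ copies of $X_0$, and hence so does its almost sure limit $(\Xbar_{n_1},\dots,\Xbar_{n_j})$. Since $j$ and $n_1<\dots<n_j$ were arbitrary, $\{\Xbar_n\}$ is i.i.d.\ with $1/\Xbar_0\sim\mathrm{Gamma}(1,\theta)$, as claimed.

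The main obstacle is the limiting step: showing that the finite-corner contribution $(Y^M_{M+1}-1)\prod_{k=M+1}^{n-1}V_k/X_k$ to $Y^M_n$ vanishes as $M\to-\infty$. This is where \eqref{eq:exp-decay} — available as soon as \eqref{eq:condineq} has been verified — must be combined with a routine tail estimate on the extra log-gamma boundary weight, together with enough care in the coupling of the $M$-models that the almost sure limit is genuinely the $\Xbar_n$ of the lemma. The other ingredients — the identification of $e^{B_{\bfy}((n,1),(n+1,1))}$ via Lemma \ref{lm:Lindley} and the $e_2$-shift argument — are routine.
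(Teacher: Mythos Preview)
Your approach is correct but takes a different route from the paper. You push the L-shaped corner to $-\infty$ and show $Y^M_n\to Y_n$ almost surely by controlling the boundary term $(Y^M_{M+1}-1)\prod_{k=M+1}^{n-1}V_k/X_k$ via \eqref{eq:exp-decay} together with a Borel--Cantelli estimate on the extra vertical boundary weights. The paper instead works on $\Z_+$: it fixes a single initial variable $Y'_0$ with $1/Y'_0\sim\mathrm{Gamma}(\rho-\theta)$, runs the one-sided recursion to produce $(Y'_n,\Xbar'_n)_{n\ge0}$, invokes the Burke property to get shift-stationarity of $\{(Y'_n,\Xbar'_n,X_n,V_n):n\ge0\}$ and $\Xbar'\stackrel{d}{=}X$, extends the process to all of $\Z$ via Kolmogorov's theorem, and then identifies the extended $Y'$ with the $Y$ of \eqref{eq:Ydef} by appealing to the uniqueness clause of Lemma~\ref{lem:Yunique}. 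In effect, the paper lets Lemma~\ref{lem:Yunique} absorb the limit you carry out by hand: the uniqueness of stationary finite solutions to \eqref{eq:Y0recursion} replaces your coupling of the $M$-models and the tail control of the vertical weights. Your route is more explicit but longer; the paper's is shorter because it recycles a lemma already in hand.
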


\begin{proof}
Let $Y'_0$ be independent of $\{X_n,V_n:n\in\Z_+\}$ with $1/Y_0'$ being gamma-distributed with scale parameter $1$ and shape parameter $\rho-\theta$. 
Define $\{Y_n':n\in\N\}$ and $\{\Xbar'_n:n\in\Z_+\}$ inductively using
	\[Y'_{n+1}=1+V_nX_n^{-1}Y_n\quad\text{and}\quad\Xbar_n'=V_{n+1}(1+V_n^{-1}X_n/Y_n').\]
The Burke property \cite[Theorem 3.3]{Sep-12-corr} tells us that 
$\{(Y'_{m+n},\Xbar'_{m+n},X_{m+n},V_{m+n}):n\in\Z_+\}$ has the same distribution for all $m\in\Z_+$ and that $\{\Xbar'_n:n\in\Z_+\}$ has the same distribution as $\{X_n:n\in\Z_+\}$.

Using Kolmogorov's extension theorem we can extend the above random variables 
to a family $\{(Y'_n,\Xbar'_n,X_n,V_n):n\in\Z\}$. In particular, the distribution of $\Xbar'$ is the same as that of the process $X$ and $Y'$ and $\Xbar'$ satisfy the above induction for all $n\in\Z$.

By the uniqueness in Lemma \ref{lem:Yunique},  $Y'$ must equal the process $Y$ defined by \eqref{eq:Ydef}, which then implies that $\Xbar'$ is the same as $\Xbar$ defined by \eqref{eq:X1def1}. The claim now follows because we already established that $\Xbar'$ has the same distribution as $X$.
\end{proof}

\section{Proof of Theorems \ref{thm:uniqueness} and \ref{thm:ergodicity}}\label{sec:proofs}

Fix an i.i.d.\ probability measure $\P_0$ on $(\Omega_0,\cF_0)$ with $L^1$ weights.
Let $\P$ be a stationary  future-independent $L^1$ corrector distribution with $\Omega_0$-marginal $\P_0$.  Let $X_n^0=e^{B(ne_1,(n+1)e_1)}$, $Y^0_n=e^{B(ne_1,ne_1+e_2)-\w_{ne_1+e_2}}$, and $V_n=e^{\w_{ne_1+e_2}}$, $n\in\Z$. 
Future independence implies the two processes $X^0$ and $V$ are independent of 
each other. Let $\sI$ be the invariant $\sigma$-algebra for the process $(X^0,V)$.
Let $\mu$ be the distribution of $A^0=\{\log X^0_n:n\in\Z\}$. 
Let $M_0=\E[\w_0]=\E[\log V_0]$. 
Recovery \eqref{recovery} implies that $B(0,e_1)>\w_{e_1}$ and hence $\E[\log A_0^0\,|\,\sI]=\E[B(0,e_1)\,|\,\sI]>\E[\w_0]$.
In particular, $\alpha=\bfm_\P\cdot e_1=\E[B(0,e_1)]>M_0$.

\begin{lemma}\label{lm:aux}
There exists a Borel-measurable map $F:\R^\Z\times\R^{\Z\times\N}\to\R^{\Z^2\times\Z^2}$
such that $\P$-almost surely, 
	\begin{align*}
	&\{B(x,y,\w):x,y\in\Z\times\Z_+\}\\
	&\qquad=F\bigl(\{B(ne_1,(n+1)e_1):n\in\Z\},\{\w_{ne_1+ke_2}:n\in\Z,k\in\N\}\bigr).
	\end{align*}
\end{lemma}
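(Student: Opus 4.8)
The plan is to reconstruct all the corrector values $\{B(x,y,\w):x,y\in\Z\times\Z_+\}$ from the boundary data on the line $\Z e_1$ together with the bulk weights in the upper half-plane, by running the deterministic recursions of Lemma \ref{lm:Lindley} upward one row at a time. Concretely, set $X^0_n=e^{B(ne_1,(n+1)e_1)}$ and $V^{(k)}_n=e^{\w_{ne_1+ke_2}}$, so that the inputs of $F$ are exactly $\{A^0_n=\log X^0_n\}$ and $\{W^{(k)}_n=\log V^{(k)}_n:n\in\Z,k\in\N\}$. Since $\P$ is a future-independent corrector distribution with i.i.d.\ $\Omega_0$-marginal, the process $(X^0,V^{(1)})$ falls into the setting of Section \ref{sec:aux}, and the conditional inequality $\E[\log X^0_0\mid\sI]>\E[\log V^{(1)}_0]$ holds by the recovery argument noted just before the statement. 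Hence Lemma \ref{lem:Yunique} applies: the stationary, a.s.\ finite solution $Y^0$ of \eqref{eq:Y0recursion} is given by the explicit series \eqref{eq:Ydef} in $\{X^0_n,V^{(1)}_n\}$, and $Y^0_n=e^{B(ne_1,ne_1+e_2)-\w_{ne_1+e_2}}$ is a stationary a.s.\ finite solution of that same recursion (this is the content of Lemma \ref{lm:Lindley}, equation \eqref{Lindley}), so the two coincide $\P$-a.s. This identifies $\{B(ne_1,ne_1+e_2):n\in\Z\}$ as a Borel function of $(A^0,W^{(1)})$, and then $X^1_n:=e^{B(ne_1+e_2,(n+1)e_1+e_2)}=\Xbar^0_n$ via \eqref{eq:X1def1}, again a Borel function of the inputs.

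The key steps, in order: (1) express $Y^0$ on row $0$ via \eqref{eq:Ydef}, using uniqueness from Lemma \ref{lem:Yunique}, giving $B(ne_1,ne_1+e_2)$ as a measurable function of $(A^0,W^{(1)})$; (2) obtain $X^1_n$ on row $1$ via $\Xbar^0_n$ from \eqref{eq:X1def1}; (3) observe that $(X^1,V^{(2)})$ again sits in the Section \ref{sec:aux} setting — using \eqref{eq:means}, $\E[\log X^1_0\mid\sI]=\E[\log X^0_0\mid\sI]>M_0=\E[\log V^{(2)}_0]$, and $V^{(2)}$ is i.i.d.\ and independent of $X^1$ by future-independence — so step (1)–(2) can be iterated to produce, inductively, $B((n,k-1),(n,k))$ and $X^k_n=e^{B((n,k),(n+1,k))}$ as Borel functions of $(A^0,W^{(1)},\dots,W^{(k)})$ for every $k\ge 1$; (4) for each fixed row $k$ this recovers $B((n,k),(n+1,k))$ and $B((n,k),(n,k+1))$, hence via the cocycle property \eqref{cocycle} all of $\{B(x,y):x,y\in\Z\times\{0,\dots,K\}\}$ for every $K$, and assembling over $K$ gives the full field on $\Z\times\Z_+$; (5) measurability of the assembled map $F$ follows since each finite stage is a composition of the Borel recursion maps (the infinite series in \eqref{eq:Ydef} is an a.s.\ convergent limit of measurable partial sums, hence measurable) and a countable concatenation of Borel maps is Borel.

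The main obstacle is step (1): establishing that the \emph{particular} process $Y^0_n=e^{B(ne_1,ne_1+e_2)-\w_{ne_1+e_2}}$ — which a priori we only know solves the recursion \eqref{eq:Y0recursion} and is stationary — actually equals the explicit series \eqref{eq:Ydef}, i.e.\ is the \emph{finite} solution rather than one of the pathological solutions Lemma \ref{lem:Yunique} allows. This is exactly where integrability of $B$ (property (c) in the definition of corrector distribution, giving $\E|B(ne_1,ne_1+e_2)|<\infty$ and hence, with $\E|\w_0|<\infty$, that $\log Y^0_0$ is integrable so $|Y^0_0|<\infty$ a.s.) plus stationarity are used to rule out the non-stationary/$+\infty$ alternatives in Lemma \ref{lem:Yunique}; one must also check the strict conditional inequality \eqref{eq:condineq} for the row-$k$ process at each stage, which propagates through \eqref{eq:means}. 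A secondary, purely bookkeeping, point is keeping the measurable dependence clean across the induction — each row only consumes one new strip of weights, so the map into row $k$ depends on the first $k$ weight-strips, and one should phrase $F$ as acting on the whole family $\{\w_{ne_1+ke_2}\}$ with the understanding that the $(x,y)$-entry with $x,y\in\Z\times\{0,\dots,K\}$ uses only finitely many coordinates — this finite-range-per-output structure makes the Borel measurability of $F$ immediate.
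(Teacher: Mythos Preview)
Your proof is correct and follows essentially the same route as the paper's: identify $Y^0$ via the uniqueness in Lemma~\ref{lem:Yunique}, read off $X^1=\Xbar^0$ from \eqref{eq:X1def1}, iterate row by row, and finish with the cocycle property.

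One small imprecision is worth flagging in step~(3). The $\sI$ appearing in \eqref{eq:means} is the invariant $\sigma$-algebra of $(X^0,V^{(1)})$, whereas to place $(X^1,V^{(2)})$ back into the Section~\ref{sec:aux} framework you need \eqref{eq:condineq} with respect to the invariant $\sigma$-algebra of $(X^1,V^{(2)})$; these are not literally the same object, so an extra tower-property step (using that the latter $\sigma$-algebra sits inside the former up to null sets) is needed to close the loop. The paper sidesteps this entirely: since $\P$ is $T_{e_2}$-invariant, $\Abar^0$ has the \emph{same} law $\mu$ as $A^0$ (equivalently $\Phi(\mu)=\mu$), so $(X^1,V^{(2)})\stackrel{d}{=}(X^0,V^{(1)})$ and every hypothesis at row $1$---including \eqref{eq:condineq} and the a.s.\ finiteness/stationarity of $Y^1$---holds for exactly the same reason it held at row $0$. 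This is both shorter and avoids tracking nested invariant $\sigma$-algebras through the induction.
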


\begin{proof}
Lemma \ref{lm:Lindley} implies that $Y^0$ satisfies \eqref{eq:Y0recursion}. Since it is a stationary almost surely finite process, Lemma \ref{lem:Yunique} implies that $Y^0$ has the representation \eqref{eq:Ydef}.
Then the second equation in \eqref{Lindley} says that $e^{B(ne_1+e_2,(n+1)e_1+e_2)}$ is equal to $\Xbar_n^0$, defined by \eqref{eq:X1def1}.  This argument shows that the process $\{B(ne_1,ne_1+e_2),B(ne_1+e_2,(n+1)e_1+e_2):n\in\Z\}$ is a measurable function of $\{B(ne_1,(n+1)e_1):n\in\Z\}$ and $\{\w_{ne_1+e_2}:n\in\Z\}$.  

Since $\P$ is stationary, we have that $\Abar^0=\{\log\Xbar^0_n:n\in\Z\}$ has the same distribution $\mu$ as $A^0$.  In other words, $\Phi(\mu)=\mu$. This lets us repeat the above procedure inductively to get that $\{B(x,x+e_i):x\in\Z\times\Z_+,i=1,2\}$   is a measurable function of  $\{B(ne_1,(n+1)e_1):n\in\Z\}$ and $\{\w_{ne_1+ke_2}:n\in\Z,k\in\N\}$. We also have $B(x+e_i,x)=-B(x,x+e_i)$, $\P$-almost surely. Then the cocycle property \eqref{cocycle} implies that for $x,y\in\Z\times\Z_+$, $B(x,y)$ is the sum of $B(x_k,x_{k+1})$ along any path with steps $\{\pm e_1,\pm e_2\}$ from $x$ to $y$. The claim of the lemma follows.
\end{proof}

\begin{corollary}\label{cor:unique P}
If $\P'$ is a stationary  future-independent $L^1$ corrector distribution with $\Omega_0$-marginal $\P_0$ and the distributions of $\{B(ne_1,(n+1)e_1):n\in\Z\}$ under $\P'$ and $\P$ match, then $\P'=\P$.
\end{corollary}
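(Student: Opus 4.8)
The plan is to deduce Corollary \ref{cor:unique P} directly from Lemma \ref{lm:aux} by a symmetry-and-cocycle argument. First I would observe that Lemma \ref{lm:aux} already expresses all of $\{B(x,y):x,y\in\Z\times\Z_+\}$ as a \emph{fixed} measurable function $F$ of the pair of processes $\bigl(\{B(ne_1,(n+1)e_1):n\in\Z\},\{\w_{ne_1+ke_2}:n\in\Z,k\in\N\}\bigr)$, and that the same function $F$ works for both $\P$ and $\P'$ since $F$ is determined purely by the almost-sure corrector relations (the recursions \eqref{Lindley} and \eqref{eq:Ydef}, the cocycle property, and $B(x+e_i,x)=-B(x,x+e_i)$), not by the particular distribution. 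So if $\{B(ne_1,(n+1)e_1):n\in\Z\}$ has the same law under $\P$ and $\P'$, then, since both corrector distributions have the \emph{same} $\Omega_0$-marginal $\P_0$ (in particular the same law for $\{\w_{ne_1+ke_2}:n\in\Z,k\in\N\}$), and since under each measure future-independence makes $\{B(ne_1,(n+1)e_1):n\in\Z\}$ independent of $\{\w_{ne_1+ke_2}:n\in\Z,k\in\N\}$ (the latter sitting strictly to the future of the line $\Z e_1$), the joint law of the input pair agrees under $\P$ and $\P'$. Pushing this joint law forward through $F$ gives that the law of $\{B(x,y):x,y\in\Z\times\Z_+\}$ agrees under $\P$ and $\P'$, and together with the matching $\Omega_0$-marginal this pins down the joint law of $\bigl(\{\w_x:x\in\Z\times\Z_+\},\{B(x,y):x,y\in\Z\times\Z_+\}\bigr)$.

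Next I would upgrade from the half-plane $\Z\times\Z_+$ to all of $\Z^2$. By shift-invariance of $\P$ and $\P'$ under $T_{e_2}$, the restriction of the full configuration to the half-plane $\Z\times\Z_{\ge -m}$ has, under $\P$, the same law as the restriction to $\Z\times\Z_+$ has under $T_{-me_2}$-pushforward, which is again $\P$; and likewise for $\P'$. Since the previous paragraph identified the half-plane laws and these are consistent across $m$, Kolmogorov's extension theorem (exactly as invoked in Section \ref{sec:bdry}) forces the laws on the full space $\Omega=\R^{\Z^2}\times\R^{\Z^2\times\Z^2}$ to coincide; here one also uses that $B(x,y)$ for arbitrary $x,y\in\Z^2$ is recovered as a cocycle sum of nearest-neighbor increments, all of which lie in some half-plane $\Z\times\Z_{\ge -m}$. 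Hence $\P'=\P$ on $(\Omega,\cF)$.

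The one point that needs a little care --- and which I expect to be the main obstacle --- is the independence input and the direction of the argument: I must check that under a general corrector distribution the line-process $\{B(ne_1,(n+1)e_1):n\in\Z\}$ is genuinely independent of the future weights $\{\w_{ne_1+ke_2}:n\in\Z,k\in\N\}$ used as the second argument of $F$. This is precisely the content of the future-independence axiom \eqref{future} applied to the down-right path $\bfy=\Z e_1$ (whose associated ``past'' $\{x,y\le v\text{ for some }v\in\bfy\}$ contains the pairs $(ne_1,(n+1)e_1)$, and whose complementary ``future'' weight set is exactly $\{w_z:z\not\le v\ \forall v\in\bfy\}=\{\w_{ne_1+ke_2}:n\in\Z,k\ge 1\}$). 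Once that is in hand, the argument is a routine matching of pushforward measures, so I would keep the write-up short: state that $F$ from Lemma \ref{lm:aux} is the \emph{same} for $\P$ and $\P'$, note that the input law is a product of the (matching) line-process law and the (matching, since equal to $\P_0$) future-weight law, conclude the half-plane laws agree, and finish with the $T_{e_2}$-shift plus Kolmogorov extension step to get equality on all of $\Omega$.
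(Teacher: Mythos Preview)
Your proposal is correct and follows essentially the same approach as the paper's proof, which reads in its entirety: ``Lemma \ref{lm:aux} implies that the distributions of $\{B(x,y):x,y\in\Z\times\Z_+\}$ under $\P$ and $\P'$ match. Then stationarity of the two probability measures implies $\P=\P'$.'' You have simply unpacked the implicit steps: that $F$ is the same deterministic map for both measures, that future-independence (applied to $\bfy=\Z e_1$) makes the input pair a product, and that stationarity under $T_{e_2}$ upgrades half-plane agreement to full agreement. One minor remark: invoking Kolmogorov's extension theorem is unnecessary here, since $\P$ and $\P'$ are already defined on $(\Omega,\cF)$ and it suffices to check agreement on cylinder sets, each of which lives in some shifted half-plane.
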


\begin{proof}
Lemma \ref{lm:aux} implies that the distributions of $\{B(x,y):x,y\in\Z\times\Z_+\}$ under $\P$ and $\P'$ match. Then stationarity of the two probability measures implies $\P=\P'$.
\end{proof}

\begin{proof}[Proof of Theorem \ref{thm:uniqueness}]
As was mentioned in the proof of Lemma \ref{lm:aux}, $\mu$ is a fixed point of $\Phi$. 
Corollary \ref{cor:unique} says that there exists at most one ergodic such $\mu$.
Corollary \ref{cor:unique P} thus implies that there exists at most one $T_{e_1}$-ergodic $\P$.
Switching $e_1$ and $e_2$ around in the definitions of $X^0$ and $Y^0$ we get the same result for the $T_{e_2}$ shift.  
\end{proof}

\begin{proof}[Proof of Theorem \ref{thm:ergodicity}]
Theorem 4.4 and Lemma 4.5(c) in \cite{Jan-Ras-18-arxiv} 
imply that  $n^{-1}B(0,ne_1)$ converges almost surely to $\bfm_\P\cdot e_1$.
Since $B(0,ne_1)=\sum_{m=0}^{n-1}A_m^0$ we have that \eqref{erg-lim} holds and Lemma \ref{lem:ergofixed} says that $\mu$ is ergodic. Corollary \ref{cor:unique P} says $\P$ is determined by $\mu\otimes\Gamma^{\otimes(\Z\times\Z_+)}$.  Since this is a product of an ergodic measure and a product measure, it is ergodic. Ergodicity of $\P$ under the $T_{e_1}$ shift follows.   A symmetric argument gives the ergodicity under the $T_{e_2}$ shift. 
\end{proof}

\bibliographystyle{aop-no-url}
\bibliography{firasbib2010}

\begin{thebibliography}{30}
\expandafter\ifx\csname natexlab\endcsname\relax\def\natexlab#1{#1}\fi
\expandafter\ifx\csname url\endcsname\relax
  \def\url#1{\texttt{#1}}\fi
\expandafter\ifx\csname urlprefix\endcsname\relax\def\urlprefix{URL }\fi

\bibitem{Arm-Sou-12}
\textsc{Armstrong, S.~N.} and \textsc{Souganidis, P.~E.} (2012).
\newblock Stochastic homogenization of {H}amilton-{J}acobi and degenerate
  {B}ellman equations in unbounded environments.
\newblock \textit{J. Math. Pures Appl. (9)} \textbf{97} 460--504.

\bibitem{Bal-Ras-Sep-18-}
\textsc{Bal{{\'a}}zs, M.}, \textsc{Rassoul-Agha, F.} and
  \textsc{Sepp{{\"a}}l{{\"a}}inen, T.} (2019).
\newblock Large deviations and wandering exponent for random walk in a dynamic
  beta environment.
\newblock \textit{Ann. Probab.} To appear ({\tt arXiv 1801.08070}).

\bibitem{Bar-Cor-17}
\textsc{Barraquand, G.} and \textsc{Corwin, I.} (2017).
\newblock Random-walk in {B}eta-distributed random environment.
\newblock \textit{Probab. Theory Related Fields} \textbf{167} 1057--1116.

\bibitem{Cha-94}
\textsc{Chang, C.~S.} (1994).
\newblock On the input-output map of a {$G/G/1$} queue.
\newblock \textit{J. Appl. Probab.} \textbf{31} 1128--1133.

\bibitem{Cha-Noa-18-ejp}
\textsc{Chaumont, H.} and \textsc{Noack, C.} (2018{\natexlab{a}}).
\newblock Characterizing stationary {$1+1$} dimensional lattice polymer models.
\newblock \textit{Electron. J. Probab.} \textbf{23} Paper No. 38, 19.

\bibitem{Cha-Noa-18-alea}
\textsc{Chaumont, H.} and \textsc{Noack, C.} (2018{\natexlab{b}}).
\newblock Fluctuation exponents for stationary exactly solvable lattice polymer
  models via a {M}ellin transform framework.
\newblock \textit{ALEA Lat. Am. J. Probab. Math. Stat.} \textbf{15} 509--547.

\bibitem{Com-17}
\textsc{Comets, F.} (2017).
\newblock \textit{Directed polymers in random environments}, vol. 2175 of
  \textit{Lecture Notes in Mathematics}.
\newblock Springer, Cham.
\newblock Lecture notes from the 46th Probability Summer School held in
  Saint-Flour, 2016.

\bibitem{Com-Shi-Yos-04}
\textsc{Comets, F.}, \textsc{Shiga, T.} and \textsc{Yoshida, N.} (2004).
\newblock Probabilistic analysis of directed polymers in a random environment:
  a review.
\newblock In \textit{Stochastic analysis on large scale interacting systems},
  vol.~39 of \textit{Adv. Stud. Pure Math.} Math. Soc. Japan, Tokyo, 115--142.

\bibitem{Cor-12}
\textsc{Corwin, I.} (2012).
\newblock The {K}ardar-{P}arisi-{Z}hang equation and universality class.
\newblock \textit{Random Matrices Theory Appl.} \textbf{1} 1130001, 76.

\bibitem{Cor-16}
\textsc{Corwin, I.} (2016).
\newblock Kardar-{P}arisi-{Z}hang universality.
\newblock \textit{Notices Amer. Math. Soc.} \textbf{63} 230--239.

\bibitem{Cor-Sep-She-15}
\textsc{Corwin, I.}, \textsc{Sepp{{\"a}}l{{\"a}}inen, T.} and \textsc{Shen, H.}
  (2015).
\newblock The strict-weak lattice polymer.
\newblock \textit{J. Stat. Phys.} \textbf{160} 1027--1053.

\bibitem{Hol-09}
\textsc{den Hollander, F.} (2009).
\newblock \textit{Random polymers}, vol. 1974 of \textit{Lecture Notes in
  Mathematics}.
\newblock Springer-Verlag, Berlin.
\newblock Lectures from the 37th Probability Summer School held in Saint-Flour,
  2007.

\bibitem{Geo-etal-15}
\textsc{Georgiou, N.}, \textsc{Rassoul-Agha, F.},
  \textsc{Sepp{{\"a}}l{{\"a}}inen, T.} and \textsc{Yilmaz, A.} (2015).
\newblock Ratios of partition functions for the log-gamma polymer.
\newblock \textit{Ann. Probab.} \textbf{43} 2282--2331.

\bibitem{Gia-07}
\textsc{Giacomin, G.} (2007).
\newblock \textit{Random polymer models}.
\newblock Imperial College Press, London.

\bibitem{Gra-09}
\textsc{Gray, R.~M.} (2009).
\newblock \textit{Probability, random processes, and ergodic properties}.
\newblock 2nd edn. Springer, Dordrecht.

\bibitem{Hal-Tak-15}
\textsc{Halpin-Healy, T.} and \textsc{Takeuchi, K.~A.} (2015).
\newblock A {KPZ} cocktail---shaken, not stirred \dots toasting 30 years of
  kinetically roughened surfaces.
\newblock \textit{J. Stat. Phys.} \textbf{160} 794--814.

\bibitem{Hus-Hen-85}
\textsc{Huse, D.~A.} and \textsc{Henley, C.~L.} (1985).
\newblock Pinning and roughening of domain walls in ising systems due to random
  impurities.
\newblock \textit{Phys. Rev. Lett.} \textbf{54} 2708--2711.

\bibitem{Jan-Ras-18-arxiv}
\textsc{Janjigian, C.} and \textsc{Rassoul-Agha, F.} (2018).
\newblock {B}usemann functions and {G}ibbs measures in directed polymer models
  on $\mathbb{Z}^2$. Extended version ({\tt arXiv 1810.03580}).

\bibitem{Jan-Ras-19-pams-}
\textsc{Janjigian, C.} and \textsc{Rassoul-Agha, F.} (2019).
\newblock Existence, uniqueness, and stability of global solutions of a
  discrete stochastic {B}urgers equation. Preprint.

\bibitem{Kar-Par-Zha-86}
\textsc{Kardar, M.}, \textsc{Parisi, G.} and \textsc{Zhang, Y.-C.} (1986).
\newblock Dynamic scaling of growing interfaces.
\newblock \textit{Phys. Rev. Lett.} \textbf{56} 889--892.

\bibitem{Kos-07}
\textsc{Kosygina, E.} (2007).
\newblock Homogenization of stochastic {H}amilton-{J}acobi equations: brief
  review of methods and applications.
\newblock In \textit{Stochastic analysis and partial differential equations},
  vol. 429 of \textit{Contemp. Math.} Amer. Math. Soc., Providence, RI,
  189--204.

\bibitem{Kru-Mea-Hal-92}
\textsc{Krug, J.}, \textsc{Meakin, P.} and \textsc{Halpin-Healy, T.} (1992).
\newblock Amplitude universality for driven interfaces and directed polymers in
  random media.
\newblock \textit{Phys. Rev. A} \textbf{45} 638--653.

\bibitem{Oco-Yor-01}
\textsc{O'Connell, N.} and \textsc{Yor, M.} (2001).
\newblock Brownian analogues of {B}urke's theorem.
\newblock \textit{Stochastic Process. Appl.} \textbf{96} 285--304.

\bibitem{Qua-12}
\textsc{Quastel, J.} (2012).
\newblock Introduction to {KPZ}.
\newblock In \textit{Current developments in mathematics, 2011}. Int. Press,
  Somerville, MA, 125--194.

\bibitem{Qua-Spo-15}
\textsc{Quastel, J.} and \textsc{Spohn, H.} (2015).
\newblock The one-dimensional {KPZ} equation and its universality class.
\newblock \textit{J. Stat. Phys.} \textbf{160} 965--984.

\bibitem{Ras-Sep-14}
\textsc{Rassoul-Agha, F.} and \textsc{Sepp{{\"a}}l{{\"a}}inen, T.} (2014).
\newblock Quenched point-to-point free energy for random walks in random
  potentials.
\newblock \textit{Probab. Theory Related Fields} \textbf{158} 711--750.

\bibitem{Sep-12-corr}
\textsc{Sepp{{\"a}}l{{\"a}}inen, T.} (2012).
\newblock Scaling for a one-dimensional directed polymer with boundary
  conditions.
\newblock \textit{Ann. Probab.} \textbf{40} 19--73.
\newblock Corrected version available at {\tt arXiv:0911.2446}.

\bibitem{Spo-14}
\textsc{Spohn, H.} (2014).
\newblock K{PZ} scaling theory and the semidiscrete directed polymer model.
\newblock In \textit{Random matrix theory, interacting particle systems, and
  integrable systems}, vol.~65 of \textit{Math. Sci. Res. Inst. Publ.}
  Cambridge Univ. Press, New York, 483--493.

\bibitem{Thi-16}
\textsc{Thiery, T.} (2016).
\newblock Stationary measures for two dual families of finite and zero
  temperature models of directed polymers on the square lattice.
\newblock \textit{J. Stat. Phys.} \textbf{165} 44--85.

\bibitem{Thi-Dou-15}
\textsc{Thiery, T.} and \textsc{Le~Doussal, P.} (2015).
\newblock On integrable directed polymer models on the square lattice.
\newblock \textit{J. Phys. A} \textbf{48} 465001, 41.

\end{thebibliography}

\end{document}